\tikzstyle{vertex}=[circle,draw=black,fill=black,inner sep=0,minimum size=3pt,text=white,font=\footnotesize]
\newtheorem{thm}{Theorem}%[chapter]
\newtheorem{prop}[thm]{Proposition}
\newtheorem{definition}[thm]{Definition}
\newtheorem{cor}[thm]{Corollary}
\newtheorem{rem}[thm]{Remark}
\newcommand\cA{{\mathcal A}}
\newcommand\cF{{\mathcal F}}
\newcommand\cG{{\mathcal G}}
\newcommand\cH{{\mathcal H}}
\newcommand{\abs}[1]{\left\lvert{#1}\right\rvert}
\newcommand{\ignore}[1]{}
\title{
	Generalized Tur\'an problems for disjoint copies of graphs
}
\begin{document}
	
	\author{
		D\'aniel Gerbner \thanks{Alfr\'ed R\'enyi Institute of Mathematics, Hungarian Academy of Sciences. e-mail: gerbner@renyi.hu}
		\qquad
		Abhishek Methuku \thanks{ Central European University, Budapest. e-mail: abhishekmethuku@gmail.com}
		\qquad
		M\'at\'e Vizer \thanks{Alfr\'ed R\'enyi Institute of Mathematics, Hungarian Academy of Sciences. e-mail: vizermate@gmail.com.}}
	
	\date{
		\today}

	\maketitle
	
	\begin{abstract}
		Given two graphs $H$ and $F$, the maximum possible number of copies of $H$ in an $F$-free graph on $n$ vertices is denoted by $ex(n,H,F)$. We investigate the function $ex(n,H,kF)$, where $kF$ denotes $k$ vertex disjoint copies of a fixed graph $F$. Our results include cases when $F$ is a complete graph, cycle or a complete bipartite graph. 
		
	\end{abstract}
	
	\vspace{4mm}
	
	\noindent
	{\bf Keywords:} Tur\'an numbers, disjoint copies, generalized Tur\'an
	
	\noindent
	{\bf AMS Subj.\ Class.\ (2010)}: 05C35, 05C38
	
	\section{Introduction}
	
	The vertex set of a graph $G$ is denoted by $V(G)$ and its edge set is denoted by $E(G)$. The \textit{disjoint union} $G \cup H$ of graphs $G$ and $H$ with disjoint vertex sets $V(G)$ and $V(H)$ is the graph with the vertex set $V(G) \cup V(H)$ and edge set $E(G) \cup E(H)$. The \textit{join} $G + H$, of graphs $G$ and $H$ with disjoint vertex sets is the graph obtained by taking a copy of $G$ and a copy of $H$ on disjoint vertex sets and adding all the edges between them. 
    
    Given a positive integer $k$ and a graph $F$, the vertex disjoint union of $k$ copies of the graph $F$ is denoted by $kF$. Let $C_l$ denote a cycle of length $l$, $K_{s,t}$ denote the complete bipartite graph with parts of sizes $s$ and $t$ and let $K_r$ denote the complete graph on $r$ vertices. 
	
	For a set of graphs $\cF$ the \textit{Tur\'an number} of  $\cF$, $ex(n,\cF)$, denotes the maximum number of edges of an $n$-vertex graph having no member of $\cF$ as a subgraph. If $\cF$ contains only a single graph $F$, we simply denote it by $ex(n,F)$. This function has been intensively studied, starting with the theorems of Mantel \cite{M1907} and Tur\'an \cite{T1941} that determine $ex(n,K_{r+1})$ for $r \ge 3$. Tur\'an also showed in \cite{T1941} that a complete $r$-partite graph on $n$ vertices with as equal parts as possible is the unique extremal graph. This extremal graph is called \textit{Tur\'an graph} and is denoted by $T_r(n)$. See, for example, \cite{FS2013,S1997} for surveys on this topic.
    
    Simonovits \cite{S1966} and Moon \cite{moon} showed that if $n$ is sufficiently large, then $K_{k-1} + T_r(n - k + 1)$ is the unique extremal graph for $\cF=\{ kK_{r+1} \}$. In \cite{G2011} Gorgol initiated the systematic investigation of Tur\'an numbers of disjoint copies of connected graphs and proved the following.
	\begin{thm}[Gorgol]		\label{Gorgol}
		For every non-empty graph $F$ and $k \ge 1$, we have $$ex(n,kF)=ex(n,F)+O(n).$$ 
	\end{thm}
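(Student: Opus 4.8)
The plan is to sandwich $ex(n,kF)$ between $ex(n,F)$ and $ex(n,F)+c\cdot n$ for a constant $c=c(k,F)$; these two bounds immediately give $ex(n,kF)=ex(n,F)+O(n)$. The lower bound is free: a copy of $kF$ contains a copy of $F$, so every $F$-free graph is $kF$-free, and hence an extremal $F$-free graph on $n$ vertices witnesses $ex(n,kF)\ge ex(n,F)$. (One could squeeze a little more out of the construction consisting of an extremal $F$-free graph on $n-(k-1)|V(F)|$ vertices together with $k-1$ vertex-disjoint copies of $F$, but the gain is only $O(n)$, so it is not needed.)

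For the upper bound I would run a greedy deletion argument. Let $G$ be a $kF$-free graph on $n$ vertices and set $G_0:=G$; given $G_i$, if $G_i$ contains a copy of $F$, pick one on a vertex set $S_i$ of size $|V(F)|$ and set $G_{i+1}:=G_i-S_i$, and otherwise stop. The copies of $F$ placed on $S_0,S_1,\dots$ are pairwise vertex-disjoint, so the process halts after at most $k-1$ steps, since $k$ extracted copies would form a copy of $kF$ inside $G$. Thus for some $j\le k-1$ the graph $G_j=G-W$ is $F$-free, where $W:=S_0\cup\dots\cup S_{j-1}$ satisfies $|W|\le (k-1)|V(F)|$. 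Now I count edges: since $ex(\cdot,F)$ is non-decreasing in the number of vertices (pad an extremal graph with isolated vertices), $|E(G_j)|\le ex(n-|W|,F)\le ex(n,F)$; and every edge of $G$ not present in $G_j$ is incident to $W$, so there are fewer than $|W|\cdot n\le (k-1)|V(F)|\cdot n$ of them. Hence $|E(G)|\le ex(n,F)+(k-1)|V(F)|\cdot n$, which finishes the upper bound.

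I do not expect a genuine obstacle here — the whole proof is this short greedy deletion. The only points that need a word of care are the monotonicity of $ex(m,F)$ in $m$ and the bookkeeping that makes the error term $O(n)$ with an implied constant depending on $k$ and $|V(F)|$. Note that the displayed upper bound in fact holds for every $n$ (when $n<k|V(F)|$ it is vacuous, since both sides are then bounded by a constant depending only on $k$ and $F$), so no hypothesis that $n$ is large is required.
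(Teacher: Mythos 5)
Your greedy deletion is correct and is essentially the argument behind Gorgol's sharper bound quoted immediately after the theorem (and it is the paper's canonical $(k,F)$-partition in disguise): the paper itself cites the result rather than reproving it, and the cited bound is obtained exactly as you do, by removing at most $(k-1)|V(F)|$ vertices covering a maximal collection of disjoint copies of $F$, charging $O(n)$ edges to the removed set, and bounding the remainder by $ex(\cdot,F)$. The only step to phrase with a little care is the monotonicity $ex(n-|W|,F)\le ex(n,F)$: your isolated-vertex padding proves it whenever $F$ has no isolated vertices, and in the remaining (degenerate) case it still holds for all $n\ge |V(F)|+|W|$ because for $m\ge |V(F)|$ containing $F$ is equivalent to containing $F$ with its isolated vertices deleted, while smaller $n$ contribute only $O(1)$.
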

	\noindent
	
In fact, Gorgol proved the following sharper upper bound: If $F$ is an arbitrary connected graph and $k$ is an arbitrary positive integer, then $ex(n,kF) \le ex(n-(k-1)|V(F)|,F)+ \binom{(k-1)|V(F)|}{2} +(k-1) |V(F)|(n- (k - 1)|V(F)|)$ for $n \ge k|V(F)|$. For recent results about Tur\'an numbers of disjoint copies of graphs see \cite{BK2011,LLP2013,YZ2017}.
	
	\
	
	Given a graph $H$ and a set of graphs $\cF$, the maximum possible number of copies of $H$ in an $n$-vertex graph that does not contain any copy of $F \in \cF$ is denoted by $ex(n,H,\cF)$ and is called \textit{Generalized Tur\'an number}. If $\cF=\{F\}$, we simply denote it by $ex(n,H,F)$. Note that $ex(n, K_2, F) = ex (n, F)$. Erd\H os \cite{E1962} determined $ex(n,K_s,K_t)$ exactly. We will later use the following consequence of his result.
	%(see in \cite{ALS2016}):
	
	\begin{prop}[Erd\H os]\label{Erdosprop} For $s < t$ we have:
		
		$$ex(n,K_s,K_t)= (1+o(1))\binom{t-1}{s}\left (\frac{n}{t-1} \right)^s.$$
		
	\end{prop}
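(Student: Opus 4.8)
The plan is to prove matching asymptotic bounds, using an explicit construction for the lower bound and a vertex‑weighting (Lagrangian) argument for the upper bound. Throughout, $N_s(\cdot)$ denotes the number of copies of $K_s$ in a graph.

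\emph{Lower bound.} I would take $G$ to be the Tur\'an graph $T_{t-1}(n)$, which is $K_t$‑free. Since a copy of $K_s$ in a complete multipartite graph contains at most one vertex from each part, choosing $s$ of the $t-1$ parts and one vertex from each yields a distinct copy of $K_s$; as every part has size at least $\lfloor n/(t-1)\rfloor$, this gives $ex(n,K_s,K_t)\ge N_s(T_{t-1}(n))\ge \binom{t-1}{s}\lfloor n/(t-1)\rfloor^s=(1+o(1))\binom{t-1}{s}(n/(t-1))^s$.

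\emph{Upper bound.} Let $G$ be any $K_t$‑free graph on $n$ vertices, with clique number $\omega=\omega(G)\le t-1$. For a weighting $x=(x_v)_{v\in V(G)}$ with $x_v\ge 0$ and $\sum_v x_v=1$, put $P_s(G,x)=\sum_{K_s\subseteq G}\prod_{v\in K_s}x_v$ and $\lambda_s(G)=\max_x P_s(G,x)$ (the maximum exists by compactness of the simplex). Evaluating at $x_v\equiv 1/n$ gives $N_s(G)=n^s P_s(G,x)\le \lambda_s(G)\,n^s$, so it suffices to show $\lambda_s(G)\le\binom{t-1}{s}(t-1)^{-s}$. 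The key step is that some maximizer $x^\ast$ has support inducing a clique: picking a maximizer of minimal support, if vertices $u\ne v$ in the support were non‑adjacent then no copy of $K_s$ would contain both, so $P_s$ would be affine in the amount of weight moved from $v$ to $u$, and moving all of $v$'s weight onto $u$ would not decrease $P_s$ while shrinking the support --- a contradiction. Hence $x^\ast$ is supported on a clique $K_r$ with $r\le\omega\le t-1$. Since $\lambda_s(K_r)$ is nondecreasing in $r$ (a weighting of $K_r$ extends by zeros to $K_{r+1}$ without changing $P_s$), we get $\lambda_s(G)\le\lambda_s(K_{t-1})=\max\{e_s(x_1,\dots,x_{t-1}):x_i\ge 0,\ \sum_i x_i=1\}$, where $e_s$ is the $s$‑th elementary symmetric polynomial. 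By Maclaurin's inequality this last maximum equals $\binom{t-1}{s}(t-1)^{-s}$, attained when all $x_i=1/(t-1)$. Combining the two bounds proves the proposition.

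The step I expect to require the most care is the reduction of an optimal weighting to one supported on a clique: one must justify that $P_s(G,x)$ is genuinely affine along the weight‑transfer segment (which uses that $u,v$ are non‑adjacent, so every monomial is linear in $x_u$ and linear in $x_v$ and none involves both) and that a minimal‑support maximizer exists. The remaining ingredients --- counting $K_s$'s in $T_{t-1}(n)$, the monotonicity of $\lambda_s(K_r)$ in $r$, and Maclaurin's inequality --- are routine. As an alternative one could run Zykov's symmetrization to reduce the extremal $K_t$‑free graph to a complete multipartite graph with at most $t-1$ parts and then optimize the part sizes, but the weighting argument sidesteps the delicate tie‑breaking that symmetrization requires.
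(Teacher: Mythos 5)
Your argument is correct, and it is necessarily a different route from the paper's: the paper gives no proof of this proposition at all, quoting it as an asymptotic consequence of Erd\H{o}s's exact determination of $ex(n,K_s,K_t)$ in \cite{E1962}. The classical proofs (Erd\H{o}s's original argument, or Zykov symmetrization) establish the sharper exact statement $N_s(G)\le N_s(T_{t-1}(n))$ for every $K_t$-free $G$; your Lagrangian/weighting argument instead delivers the slightly weaker but cleaner bound $N_s(G)\le\binom{t-1}{s}\left(\frac{n}{t-1}\right)^s$, which (with the Tur\'an-graph lower bound) is exactly what the asymptotic formula requires, and it sidesteps the graph-transformation bookkeeping. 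The key steps all check out: $P_s(G,x)$ is multilinear, so with $u,v$ non-adjacent no monomial involves both $x_u$ and $x_v$, the weight-transfer segment is affine and an endpoint is again a maximizer with smaller support (one small imprecision: depending on the sign of the slope you may have to move $u$'s weight onto $v$ rather than the other way around, but this is just a relabeling); a minimal-support maximizer exists since the maximizer set is nonempty and support size is a nonnegative integer; the support is then a clique of size $r\le\omega(G)\le t-1$, monotonicity of $\lambda_s(K_r)$ in $r$ is immediate by padding with zeros, and Maclaurin's inequality (extended to nonnegative entries by continuity) gives $\lambda_s(K_{t-1})=\binom{t-1}{s}(t-1)^{-s}$. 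Combined with $N_s(T_{t-1}(n))\ge\binom{t-1}{s}\lfloor n/(t-1)\rfloor^s$, this proves the proposition; as a bonus, the same Maclaurin step shows your upper bound differs from the exact extremal value only through the divisibility of $n$ by $t-1$.
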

	
	Another notable result is that of Bollob\'as and Gy\H{o}ri \cite{BG2008}, who showed that $ex(n,K_3, C_5) =\Theta(n^{3/2})$. The systematic study of the function $ex(n,H,F)$ was initiated by Alon and Shikhelman in \cite{ALS2016}.
	
	The function $ex(n,H,F)$ is closely related to the area of Berge hypergraphs. A \textit{Berge cycle} of length $k$ is an alternating sequence of distinct vertices and hyperedges of the form $v_1$,$h_{1}$,$v_2$,$h_{2},\ldots,v_k$,$h_{k}$,$v_1$ where $v_i,v_{i+1} \in h_{i}$ for each $i \in \{1,2,\ldots,k-1\}$ and $v_k,v_1 \in h_{k}$ and is denoted by Berge-$C_k$. Gy\H{o}ri and Lemons \cite{Gy_Lemons} showed that any $r$-uniform hypergraph avoiding a Berge-$C_{2l+1}$ contains $O(n^{1+1/l})$ hyperedges. They also showed that any $r$-uniform hypergraph avoiding a Berge-$C_{2l}$ contains $O(n^{1+1/l})$ hyperedges. These results easily imply the following. 
    %We include the proof for completeness.
	
	\begin{thm}
		\label{GyLemonsthm}
        We have
        \begin{enumerate}
        \item[(a)] For any $r \ge 3, \ l  \ge 2$, we have
		$$ex(n, K_r, C_{2l+1}) = O(n^{1+1/l}).$$
        
        \item[(b)] For any $r , l  \ge 2$, we have
		$$ex(n, K_r, C_{2l}) = O(n^{1+1/l}).$$
        \end{enumerate}
		
	\end{thm}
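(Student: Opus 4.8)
The plan is to reduce both statements to the hypergraph Tur\'an bounds of Gy\H ori and Lemons stated above, via the standard ``clique hypergraph'' construction. Fix an $n$-vertex graph $G$ that is $C_{2l+1}$-free (for part (a)) or $C_{2l}$-free (for part (b)), and let $\cH$ be the $r$-uniform hypergraph on vertex set $V(G)$ whose hyperedges are exactly the vertex sets of the copies of $K_r$ in $G$. Since a copy of $K_r$ is determined by its $r$-element vertex set, distinct copies of $K_r$ yield distinct hyperedges, so the number of copies of $K_r$ in $G$ equals $|E(\cH)|$; it therefore suffices to bound $|E(\cH)|$.

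The key observation is that a Berge cycle in $\cH$ forces an ordinary cycle of the same length in $G$. Indeed, suppose $v_1,h_1,v_2,h_2,\ldots,v_k,h_k,v_1$ is a Berge-$C_k$ in $\cH$, so that $v_1,\ldots,v_k$ are distinct vertices, $v_i,v_{i+1}\in h_i$ for $i\in\{1,\ldots,k-1\}$, and $v_k,v_1\in h_k$. By construction each $h_i$ is the vertex set of a clique in $G$, so $v_iv_{i+1}\in E(G)$ for each $i\in\{1,\ldots,k-1\}$ and $v_kv_1\in E(G)$. Hence $v_1v_2\cdots v_kv_1$ is a cycle of length $k$ in $G$. (Note that we do not even need the hyperedges $h_i$ to be distinct.)

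Consequently, if $G$ contains no $C_{2l+1}$ then $\cH$ contains no Berge-$C_{2l+1}$, and the Gy\H ori--Lemons bound for $r$-uniform hypergraphs without a Berge-$C_{2l+1}$ gives $|E(\cH)|=O(n^{1+1/l})$, which proves (a); likewise, if $G$ contains no $C_{2l}$ then $\cH$ contains no Berge-$C_{2l}$, and the corresponding Gy\H ori--Lemons bound yields $|E(\cH)|=O(n^{1+1/l})$, proving (b).

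There is essentially no hard step; the only points requiring a little attention are: (i) confirming that counting copies of $K_r$ is literally counting hyperedges of $\cH$ (immediate, as a clique is determined by its vertex set); and (ii) the role of the hypothesis $r\ge 3$ in part (a), which is exactly what makes the Gy\H ori--Lemons odd-cycle bound applicable --- for $r=2$ the statement is false, since a complete bipartite graph is $C_{2l+1}$-free and has $\Theta(n^2)$ edges, whereas in part (b) the case $r=2$ is permitted and the statement then coincides with the Bondy--Simonovits even-cycle bound.
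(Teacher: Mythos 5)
Your proposal is correct and follows essentially the same route as the paper: replace each copy of $K_r$ by a hyperedge on its vertex set, observe that a Berge-$C_k$ in the resulting $r$-uniform hypergraph would yield a genuine $C_k$ in $G$ (since each hyperedge spans a clique), and then invoke the Gy\H ori--Lemons bound; you merely spell out the "it is easy to see" step and the role of $r\ge 3$, which the paper leaves implicit.
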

	
    \begin{proof}
    We will prove (a) and (b) simultaneously. Let $G$ be a $C_{k}$-free graph. Replace each clique of size $r$ in it with a hyperedge on the same vertex set as the clique. It is easy to see that the resulting $r$-uniform hypergraph $H$ does not contain a Berge-$C_{k}$, and in both cases $k = 2l$ and $k = 2l+1$, $H$ has at most $O(n^{1+1/l})$ hyperedges by the theorem of Gy\H{o}ri and Lemons \cite{Gy_Lemons} mentioned before. This completes the proof as the number of cliques in $G$ is equal to the number of hyperedges in $H$.
    \end{proof}

% 	Gy\H{o}ri and Lemons \cite{Gy_Lemons} also showed that any $r$-uniform hypergraph avoiding a Berge-$C_{2l}$ contains at most $O(n^{1+1/l})$ hyperedges, which again implies the following.
	
% 	\begin{thm}[Gy\H{o}ri, Lemons]
% 		\label{GyLemonsthmeven}
% 		For any $r , l  \ge 2$, we have
% 		$$ex(n, K_r, C_{2l}) = O(n^{1+1/l}).$$
% 	\end{thm}

	Alon and Shikhelman \cite{ALS2016} noted that while $ex(n,K_3,C_5)=\Theta(n^{3/2})$, we have $ex(n,K_3,2C_5)=\Theta(n^2)$, showing that $ex(n,H,F)$ and $ex(n,H,kF)$ can have different order of magnitudes, unlike the graph case, where $ex(n,kF)=\Theta(ex(n,F))$ (see Theorem \ref{Gorgol}).
	
	Our goal in this paper is to explore this phenomenon. Most of our theorems will relate $ex(n,H,kF)$ to $ex(n,H,F)$ for several graphs $H$ and $F$. 
	
	\subsection*{General approach}
	
	The most typical example of a $kF$-free graph is obtained by taking an $F$-free graph $G$ on $n-k+1$ vertices, and considering $K_{k-1} + G$. (We will sometimes refer to these $k-1$ vertices of degree $n-1$ in $K_{k-1}$ as \emph{universal} vertices of $K_{k-1} + G$.) Indeed, since any copy of $F$ in $K_{k-1} + G$ must contain a vertex of $K_{k-1}$ and as there are only $k-1$ vertices in $K_{k-1}$, it is impossible to find $k$ vertex disjoint copies of $F$.
	
	For example, let us take a $C_5$-free graph $G$ on $n-1$ vertices, add a new vertex $v$ and connect it to all the vertices of $G$. This graph shows $ex(n,K_3,2C_5)=\Omega(n^2)$. In addition to the triangles in $G$ (which are at most $O(n^{3/2})$ by the result of Bollob\'as and Gy\H ori \cite{BG2008} mentioned before), there are triangles which contain  $v$ and an edge of $G$. If $G$ is the Tur\'an graph $T_2(n-1)$, then there are $\Omega(n^2)$ many such triangles. What happens here is that instead of counting the copies of $K_3$ in a $C_5$-free graph, we count the copies of $K_2$ (which is a subgraph of $K_3$). As this happens to be of larger order of magnitude, we get more copies of $K_3$ in a $2C_5$-free graph than in a $C_5$-free graph. 
	
	\smallskip
	
	To prove the upper bounds we will need the following operation: for an integer $k$, a graph $F$ and a $kF$-free graph $G$, we consider the maximum number of disjoint copies of $F$ in $G$. In the rest of the paper we denote the subgraph of $G$ consisting of these copies by $G_L$, and the set of vertices spanned by $G_L$ is denoted by $L(G)$.
    %(or sometimes, simply by $L$) the set of vertices spanned by these copies and by $R(G)$ (or by $R$) the set of remaining vertices of $G$. 
We denote by $R(G)$ the set $V(G)\setminus L(G)$ of the remaining vertices, and by $G_R$ the subgraph of $G$ induced by them. We call this partition of the vertices a \textit{canonical $(k,F)$-partition} of $G$. (If it is clear from the context we simply write canonical partition.) Note that $G_R$ is $F$-free.
%the subgraph of $G$, spanned by the vertex set $R(G)$ is $F$-free.
	
	\
	
	\subsection*{Structure of the paper}
	The rest of this paper is divided into sections based on which graph is forbidden. In Section 2 we prove bounds on $ex(n, H, kF)$ for general $F$, while in Section 3 one of our main results is to determine the order of magnitude of $ex(n,K_s,kK_t)$ for all $s \ge t \ge 2$ and $k \ge 1$. In Section 4, we obtain bounds on $ex(n, K_r, kC_{l})$. In Section 5 we study the case when $F$ is a complete bipartite graph. We finish our article with some concluding remarks and open problems in Section 6.
	
	\section{Forbidding a general F}
	
	\subsection{Counting arbitrary graphs}
	
    For a family of graphs $\cH$, let us define $N(\cH,G)$ as the number of copies of members of $\cH$ in $G$. If $\cH = \{H\}$, then we simply write $N(H,G)$ instead of $N(\cH,G)$.
    
    \vspace{2mm}
    
%IS THIS NOTATION REALLY NEEDED??? IT'S CONFUSING. PLACEMENT OF NOTATION?	
Let $\cH^{ind}$ be the family of all induced subgraphs of a graph $H$. 
%Then $N(\cH^{ind},G)$ is the number of copies of induced (non-empty) subgraphs of $H$ in $G$. 
Let $$\overline{ex}(n,H,F) := \max \{N(\cH^{ind},G) : G \text{ is an $F$-free graph on $n$ vertices}\}.$$ 
	
	\begin{rem}
		\label{alphaH}
		Note that if $F$ is a non-empty graph (i.e., contains at least one edge), then 
		$$\overline{ex}(n,H,F)\ge \binom{n}{\alpha(H)}.$$
		Indeed, let $G$ be an $F$-free graph on $n$ vertices and let $I \in \cH$ be an induced subgraph spanned by a largest independent set of $H$. Then any set of $\alpha(H)$ vertices in $G$ forms a copy of $I$.
	\end{rem}

	\begin{thm}\label{arbit} For any $k \ge 2$ we have,
		$$ex(n,H,kF)=O(\overline{ex}(n,H,F)).$$ Moreover, if $k \ge |V(H)|$, then $$ex(n,H,kF)=\Theta(\overline{ex}(n,H,F)).$$
	\end{thm}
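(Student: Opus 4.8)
## Proof Proposal

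The plan is to prove the two parts separately, with the upper bound $ex(n,H,kF) = O(\overline{ex}(n,H,F))$ holding for all $k \ge 2$, and the matching lower bound requiring the hypothesis $k \ge |V(H)|$. Throughout I would use the canonical $(k,F)$-partition $V(G) = L(G) \cup R(G)$ introduced in the general approach, together with the key fact that $G_R$ is $F$-free and that $|L(G)| \le (k-1)|V(F)|$ is bounded by a constant (depending only on $k$ and $F$, not on $n$).

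\medskip

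\emph{Upper bound.} Let $G$ be a $kF$-free graph on $n$ vertices with canonical partition $L(G) \cup R(G)$, and set $c := |L(G)| \le (k-1)|V(F)|$, a constant. I want to count copies of $H$ in $G$. Every copy of $H$ in $G$ meets $L(G)$ in some subset $S$ of its vertices (possibly empty), and the complementary vertices lie in $R(G)$. Fix a copy of $H$ and let $A = V(H) \cap L(G)$ be the vertices falling in $L(G)$; there are at most $\binom{c}{|A|} \le 2^c$ choices for the actual image of $A$ inside $L(G)$. Once the image of $A$ in $L(G)$ is fixed, the remaining $|V(H)| - |A|$ vertices of the copy lie in $R(G)$ and induce (inside $G_R$) an induced subgraph of $H$ — namely the subgraph of $H$ induced by $V(H) \setminus A$, which is a member of $\cH^{ind}$. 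Hence the number of ways to complete the copy is at most $N(\cH^{ind}, G_R)$. Since $G_R$ is $F$-free on at most $n$ vertices, $N(\cH^{ind}, G_R) \le \overline{ex}(n, H, F)$. Summing over the (constantly many) choices of $|A| \in \{0, 1, \dots, |V(H)|\}$ and over the (constantly many, at most $2^c$) images of $A$, and multiplying by the constant number of ways to identify which abstract vertices of $H$ form $A$, we get $N(H,G) \le C(k,F,H) \cdot \overline{ex}(n,H,F)$, proving the $O$-bound. I should be slightly careful that a single induced-subgraph structure in $G_R$ together with a fixed vertex set in $L(G)$ can be the restriction of only boundedly many copies of $H$ — but this multiplicity is at most $|V(H)|!$, again a constant.

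\medskip

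\emph{Lower bound.} Assume $k \ge |V(H)|$. Let $G_0$ be an $F$-free graph on $n - k + 1$ vertices attaining $N(\cH^{ind}, G_0) = \overline{ex}(n-k+1, H, F)$, which is $(1+o(1))\overline{ex}(n,H,F)$ — here I would note that $\overline{ex}(\cdot, H, F)$ grows at most polynomially (it is sandwiched between $\binom{n}{\alpha(H)}$ and $n^{|V(H)|}$), so shifting $n$ by the constant $k-1$ changes it only by a $1+o(1)$ factor, or more robustly one can just absorb this into the $\Theta$. Form $G := K_{k-1} + G_0$ on $n$ vertices; as observed in the general approach, $G$ is $kF$-free (any copy of $F$ must use one of the $k-1$ universal vertices). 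Now I want to produce many copies of $H$ in $G$. Take any induced subgraph $I$ of $H$ that is isomorphic to a member of $\cH^{ind}$ realized by a vertex subset $W \subseteq V(G_0)$; since $|V(I)| \le |V(H)| \le k$, we have $|V(H) \setminus V(I)| \le |V(H)| - |V(I)| \le k - 1$, so the remaining vertices of $H$ can be mapped injectively onto a subset of the $k-1$ universal vertices of $K_{k-1}$. Because those vertices are adjacent to everything, every edge of $H$ incident to $V(H) \setminus V(I)$ is automatically present, and the edges within $V(I)$ are present by the choice of $I$ as the \emph{induced} subgraph of $H$ on $V(I)$. Hence each such copy of a member of $\cH^{ind}$ in $G_0$ extends to at least one copy of $H$ in $G$, and distinct copies in $G_0$ give distinct copies of $H$. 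Therefore $N(H, G) \ge N(\cH^{ind}, G_0) = (1+o(1)) \overline{ex}(n, H, F)$, giving the lower bound and hence $ex(n, H, kF) = \Theta(\overline{ex}(n,H,F))$ when $k \ge |V(H)|$.

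\medskip

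\emph{Main obstacle.} The genuinely delicate point is the lower-bound direction: one must verify that the ``missing'' part of $H$ — namely $V(H) \setminus V(I)$ — can indeed be placed entirely in $K_{k-1}$, which is exactly why the hypothesis $k \ge |V(H)|$ enters, and that the edges inside $V(I)$ are correctly realized, which forces us to count \emph{induced} subgraphs of $H$ (a copy of an arbitrary, non-induced subgraph of $H$ in $G_0$ need not extend to a copy of $H$, since $G_0$ might contain extra edges among those vertices that $H$ does not have — but that is fine, since we are counting induced copies, and Remark~\ref{alphaH} already illustrates the mechanism with the independent-set case). A secondary, purely bookkeeping obstacle is making the constant in the $O$-bound explicit and confirming the $\overline{ex}(n-k+1,\cdot) = (1+o(1))\overline{ex}(n,\cdot)$ step; both are routine given the polynomial growth of $\overline{ex}$.
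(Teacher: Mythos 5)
Your constructions and counting are exactly the paper's: the upper bound via the canonical $(k,F)$-partition (the trace of each $H$-copy on $G_R$ is a copy of a member of $\cH^{ind}$, extendable through $L(G)$ in $O(1)$ ways), and the lower bound via $K_{k-1}+G_0$ with $G_0$ an extremal $F$-free graph on $n-k+1$ vertices. The one place where your write-up has a real hole is the last step of the lower bound: the claim that $\overline{ex}(n-k+1,H,F)=(1+o(1))\,\overline{ex}(n,H,F)$, or at least $\Omega(\overline{ex}(n,H,F))$. Your justification --- that $\overline{ex}(\cdot,H,F)$ is sandwiched between $\binom{n}{\alpha(H)}$ and $n^{|V(H)|}$, hence ``grows polynomially'', hence shifting $n$ by the constant $k-1$ only changes it by a $1+o(1)$ factor --- is a non sequitur: a function squeezed between two polynomials of different degrees can oscillate (e.g.\ $f(n)=n^{2+(n\bmod 2)}$ satisfies such bounds yet $f(n-1)=o(f(n))$ along odd $n$), so polynomial upper and lower bounds alone do not give $f(n-k+1)=\Omega(f(n))$. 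Nor can you ``absorb this into the $\Theta$'': the theorem's $\Theta$ is measured against $\overline{ex}(n,H,F)$, so a smoothness statement is genuinely needed. The paper closes exactly this step with an averaging/random-subset argument: take an $n$-vertex $F$-free graph attaining $\overline{ex}(n,H,F)$; a uniformly random induced subgraph on $n-k+1$ vertices is still $F$-free and retains each copy (which has at most $|V(H)|=O(1)$ vertices) with probability $1-o(1)$, so some $(n-k+1)$-vertex $F$-free graph carries $(1-o(1))\,\overline{ex}(n,H,F)$ copies of members of $\cH^{ind}$. Inserting this observation repairs your proof; without it, the lower bound only shows $ex(n,H,kF)\ge \overline{ex}(n-k+1,H,F)-O(1)$, which is not yet the stated $\Theta$.

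A much smaller point: your chain $|V(H)\setminus V(I)|\le |V(H)|-|V(I)|\le k-1$ silently assumes $I$ is nonempty; when $k=|V(H)|$ the single copy of the empty induced subgraph cannot be extended into $K_{k-1}$. The paper flags this explicitly --- it costs exactly one copy, so it is harmless for the asymptotics, but it should be acknowledged rather than swept into the inequality.
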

	
	\begin{proof} For the lower bound, we take an $F$-free graph $G$ on $n-k+1$ vertices that contains $\overline{ex}(n-k+1,H,F)$ copies of induced subgraphs of $H$. Then $K_{k-1}+G$ is obviously $kF$-free. If $k \ge |V(H)|$, then every copy of an induced subgraph (having at least one vertex) of $H$ 
    %(except the one with no vertices in the case $k = |V(H)|$) 
    in $G$ can be extended to a copy of $H$ in $K_{k-1}+G$, using the vertices of $K_{k-1}$. (A small technical issue is the following: Let $Z$ be the induced subgraph of $H$ with zero vertices. A copy of $Z$ in $G$ cannot be extended to a copy of $H$ in $K_{k-1}+G$ if $k = \abs{V(H)}$, but there is only one copy of $Z$ in $G$.) Thus $K_{k-1}+G$ contains at least $\overline{ex}(n-k+1,H,F)-1$ copies of $H$. Now using the following standard argument, we conclude that $\overline{ex}(n-k+1,H,F)-1 = \Omega(\overline{ex}(n,H,F))$: Consider a graph $G$ on $n$ vertices with $\overline{ex}(n,H,F)$ copies of induced subgraphs of $H$. Then a subgraph of $G$ induced by a random subset of vertices of size $n-k+1$, contains at least $(1+o(1)) \overline{ex}(n,H,F)$ copies of induced subgraphs of $H$. On the other hand, this subgraph contains at most $\overline{ex}(n-k+1,H,F)$ copies of induced subgraphs of $H$. This finishes the proof of the lower bound.
	%compare $\overline{ex}(n-k+1,H,F)$ and $\overline{ex}(n,H,F)$?
		
		Now we continue with the upper bound. Let us consider a $kF$-free graph $G$, and its canonical partition. Then every copy of $H$ in $G$ contains a subgraph in $G_R$, which contains an induced subgraph of $H$ (note that this subgraph may have zero vertices). Moreover, each copy of an induced subgraph of $H$ in $G_R$ can be extended to a copy of $H$ in $G$ using vertices from $L(G)$ in at most $2^{|L(G)|}=O(1)$ ways. Therefore, the number of copies of $H$ in $G$ is at most $O(\overline{ex}(n,H,F))$, as desired. \end{proof}
	
	\subsection{Counting triangles}
	
	Let $F_1, \dots, F_k$ be graphs different from $K_2$ 
 and let $F$ be their vertex-disjoint union. (Note that the $F_i$'s are not necessarily different.)    %such that the order of magnitude of $ex(n,F_i)$ monotone decreases in $i$. Moreover, put isomorphic graphs among the $F_i$s next to each other. Let $F$ be their vertex-disjoint union. 
    %For a set of numbers $A$ let $smax A$ denote the second largest number in $A$.
	%such that $ex(n,K_3,F_1)\ge ex(n,K_3,F_2)\ge \dots \ge ex(n,K_3,F_l)$ and $F_1\neq K_2$. Let 
	%$k_1\le\dots\le k_l$ be positive integers.  Let $F$ be the vertex-disjoint union of $k_i$ copies of $F_i$ for every $1 \le i\le l$. We say that $F$ is \textit{nice} if it has at least two non-bipartite components, or a non-bipartite component $F_i$ such that deleting any vertex from $F_i$ the resulting graph is also non-bipartite.
	%Without loss of generality, we can assume $k_1 \le k_2, \ldots, \le k_l$ 
	%Let $j$ be the smallest integer such that $k_j>1$ (if it exists).

	\begin{thm} 
		\label{general_bound}
%If $F$ is nice then
       \begin{equation*}\label{folso} ex(n,K_3,F)=\Theta\left(\max_{1 \le i \le k} \{ex(n,K_3,F_i)\}+\max_{1 \le i < j\le k} ex(n,\{F_i,F_j\})\right).\end{equation*}
       
      % Moreover, if the Erd\H os-Simonovits conjecture holds for $\{F_1,F_2\}$, then $$ex(n,K_3,F)=\Theta\left(\max_{i\le l} \{ex(n,K_3,F_i)\}+ex(n,F_2)\right).$$

%Otherwise \begin{equation}\label{also} ex(n,K_3,F)=\Theta\left(\sum_{i=1}^{l}+\sum_{i=j}^{l}ex(n,F_i)\right).\end{equation}
		
	\end{thm}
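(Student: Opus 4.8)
The plan is to prove the two bounds separately. Write $c=\max_i|V(F_i)|$ and abbreviate the asserted right-hand side by $R(n)$; I will freely use that $R(n)=\Omega(n)$ (this is easily checked from the hypotheses on the $F_i$, using e.g.\ a large matching, a large complete bipartite graph, or a disjoint union of large cliques as a witness for one of the terms), so that additive $O(n)$ errors are harmless.

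\emph{Lower bound.} I would exhibit two families of $F$-free graphs. First, since each $F_i$ is a subgraph of $F$, every $F_i$-free graph is $F$-free, so $ex(n,K_3,F)\ge ex(n,K_3,F_i)$ for each $i$. Second, fix $i<j$, take a $\{F_i,F_j\}$-free graph $G_0$ on $n-1$ vertices with $ex(n-1,\{F_i,F_j\})$ edges, and set $G=K_1+G_0$ with apex $v$. In any copy of $F=F_1\cup\dots\cup F_k$ inside $G$ at most one of the $k$ disjoint parts uses $v$, so at least one of the two parts playing the roles of $F_i$ and $F_j$ lies inside $G_0$, which is impossible; hence $G$ is $F$-free, and since every edge of $G_0$ spans a triangle with $v$ we get $ex(n,K_3,F)\ge e(G_0)=ex(n-1,\{F_i,F_j\})=\Omega\!\big(ex(n,\{F_i,F_j\})\big)$. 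Maximising over $i$ and over pairs $i<j$ gives $ex(n,K_3,F)=\Omega(R(n))$.

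\emph{Upper bound.} I would induct on $k$, the case $k=1$ being vacuous. Let $G$ be $F$-free on $n$ vertices and let $m\le k$ be largest such that $G$ contains pairwise disjoint copies $D_1\cong F_1,\dots,D_{m-1}\cong F_{m-1}$; fix such copies, put $L=\bigcup_{i<m}V(D_i)$ (so $|L|\le(k-1)c$), and note that by maximality of $m$ the graph $G-L$ has no copy of $F_m$, whence $N(K_3,G-L)\le ex(n,K_3,F_m)\le R(n)$. Every remaining triangle meets $L$, so it suffices to bound $e(G[N(v)])$ for each of the $O(1)$ vertices $v\in L$, say $v\in V(D_p)$ with $p<m$ (which forces $m\ge2$). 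Writing $J=G[N(v)\setminus L]$ we have $e(G[N(v)])\le e(J)+|L|\,n$, and $J\subseteq G-L$ has no copy of $F_m$; the crux is to show $e(J)=O(R(n)+n)$. Here I would split on $\nu$, the maximum number of pairwise disjoint copies of $F_p$ inside $J$. If $\nu\le kc$, deleting the vertices of a maximum such packing (at most $kc^2=O(1)$ vertices) leaves a graph with no $F_p$ and no $F_m$, so $e(J)\le ex(n,\{F_p,F_m\})+O(n)\le R(n)+O(n)$ since $p\ne m$. If $\nu>kc$, consider $H:=v+J$, which is a subgraph of $G$ disjoint from every $D_i$ with $i<m,\ i\ne p$: any copy of $F_m\cup F_{m+1}\cup\dots\cup F_k$ in $H$ occupies at most $(k-m+1)c\le(k-1)c<kc$ vertices, hence is disjoint from one of the $>kc$ disjoint $F_p$-copies in $J$, and adjoining that copy together with the $D_i$ ($i<m,\ i\ne p$) would produce a copy of $F$ in $G$ — a contradiction. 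Thus $H$ avoids the disjoint union $F_m\cup\dots\cup F_k$ of fewer than $k$ graphs, so by the inductive hypothesis $N(K_3,H)=O(R(n))$, and since every edge of $J$ lies in a triangle of $H$ through $v$ we get $e(J)\le N(K_3,H)=O(R(n))$. Combining the cases, $e(G[N(v)])=O(R(n)+n)$, and summing over $v\in L$ yields $N(K_3,G)\le R(n)+|L|\cdot O(R(n)+n)=O(R(n))$.

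The step I expect to be the real obstacle is exactly this control of the triangles through $L$. The naive estimate $e(G[N(v)])\le ex(n,F)$ is far too lossy — already when some $F_i$ is a clique, $ex(n,F)$ can be quadratic while the true answer is much smaller — and one genuinely needs the dichotomy above, in which a large $F_p$-packing inside a link converts the link into a forbidden configuration for a \emph{shorter} disjoint union, letting the induction take over and, crucially, causing $ex(n,K_3,\cdot)$ rather than $ex(n,\cdot)$ to reappear. Getting the bookkeeping of the disjoint copies $D_i$ and the apex vertex $v$ exactly right (so that one really recovers a copy of all of $F$) is the delicate part.
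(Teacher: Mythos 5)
Your proposal is correct, and its lower bound is exactly the paper's (an extremal $F_i$-free graph, and an apex vertex over an extremal $\{F_i,F_j\}$-free graph). Your upper bound, however, takes a genuinely different route. The paper also inducts on $k$, but its key step is much shorter: if $G$ contains both a copy of $F':=F-F_i$ and a copy of $F'':=F-F_j$ (otherwise induction finishes at once), then deleting the constant-size union $L$ of their vertex sets leaves a graph that is \emph{simultaneously} $F_i$-free and $F_j$-free, since a copy of $F_i$ (resp.\ $F_j$) avoiding $L$ would combine with the copy of $F'$ (resp.\ $F''$) inside $L$ to form $F$; counting triangles by their intersection with $L$ then gives at most $ex(n,K_3,F_i)$ triangles avoiding $L$, at most $O(ex(n,\{F_i,F_j\}))$ triangles with one vertex in $L$, and $O(n)$ others. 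You instead take a maximal packing of disjoint copies of a prefix $F_1,\dots,F_{m-1}$, use $F_m$-freeness of the remainder for triangles avoiding $L$, and control the link of each packing vertex $v\in V(D_p)$ by a dichotomy on the size of an $F_p$-packing inside the link: a small packing gives $\{F_p,F_m\}$-freeness after deleting $O(1)$ vertices, while a large packing makes $\{v\}+J$ free of the strictly shorter union $F_m\cup\dots\cup F_k$, so the induction hypothesis is applied inside the link and edges of $J$ are converted into triangles of $H$. I checked the disjointness bookkeeping in your Case 2 (the $D_i$ with $i<m$, $i\ne p$, the spare $F_p$-copy from the packing, and the copy of $F_m\cup\dots\cup F_k$ in $H$ are indeed pairwise disjoint), and it works; your link dichotomy is close in spirit to the paper's proof of Theorem \ref{2copy} (the $F^*$/center construction used there to handle neighborhoods), whereas the paper's proof of Theorem \ref{general_bound} buys brevity by exploiting the two deletions $F-F_i$ and $F-F_j$ at once, so that joint $\{F_i,F_j\}$-freeness comes for free and no link analysis is needed. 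One caveat applies equally to both arguments: the additive $O(n)$ terms are absorbed only because the right-hand side is $\Omega(n)$. Your parenthetical justification is fine when no $F_i$ is a matching (e.g.\ when the $F_i$ are connected and different from $K_2$, a perfect matching witnesses $ex(n,\{F_i,F_j\})=\Omega(n)$), but under the literal hypothesis it can fail (e.g.\ $F_1=2K_2$, $F_2=P_3$), a degenerate situation in which the stated $\Theta$-bound itself, and the paper's own proof, also break down; so this is not a gap specific to your argument.
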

    
%Note that it implies we know the order of magnitude for every non-bipartite $F$.

	\begin{proof}[Proof of Theorem \ref{general_bound}]

		For the lower bound, consider the following two constructions. 
		\begin{enumerate}
			\item Take an $F_i$-free graph containing the largest number of triangles. This graph is obviously $F$-free, showing that $ex(n,K_3,F)\ge \max_{1 \le i \le k} \{ex(n,K_3,F_i)\}$.
			
			\item Now let $i,j$ be two integers with $1 \le i < j \le k$. Take an $\{F_i,F_j\}$-free graph $G_0$ on $n-1$ vertices, and add a universal vertex $v$. Then any copy of $F_i$ and any copy of $F_j$ in the resulting graph $G$ must contain $v$, thus there are no vertex-disjoint copies of $F_i$ and $F_j$ in this graph. Therefore it does not contain $F$. Furthermore, the number of triangles in $G$ is at least the number of edges in $G_0$, as these edges form a triangle with $v$. Thus we have $ex(n,K_3,F) \ge \max_{1 \le i < j\le k} ex(n,\{F_i,F_j\})$.
		\end{enumerate}
		
		For the upper bound, we use induction on $k$. The base case $k=1$ is trivial.
        %vertices in $F$ (i.e., we assume the statement of our theorem is true for all $F$ containing less than $\sum_{i=1}^{l} k_i$ vertices), the base step is trivial. Let us consider first the case $F_1$ is isomorphic to $F_2$. 		
        Let $F'$ be the graph obtained by deleting $F_i$ from $F$ and $F''$ be the graph obtained by deleting $F_j$ from $F$. Let us consider an $F$-free graph $G$ on $n$ vertices. If $G$ is $F'$-free or $F''$-free, then we are done by induction. Thus we may assume $G$ contains a copy of $F'$ and a copy of $F''$, and let $L$ be the union of their vertex sets. Note that these copies share at least one vertex, otherwise there would be a copy of $F$ in $G$.
        
        Let $G'$ be the subgraph of $G$ induced by $V(G) \setminus L$. Then $G'$ is obviously both $F_i$-free and $F_j$-free, hence it contains at most $ex(n,\{F_i,F_j\})$ edges and at most $ex(n,K_3,F_i)$ triangles. Let $T_s$ denote the set of triangles in $G$ which contain exactly $s$ vertices from $L$. Then we have $|T_3|=O(1)$, $|T_2|=O(n)$, $|T_1|=O(ex(n,\{F_i,F_j\}))$ and $|T_0| \le ex(n,K_3,F_i)$. Adding up these bounds, the proof is complete.

	\end{proof}

\begin{rem}

\begin{itemize}
\item Note that 
%if $l \ge 2$ and $F_1=F_2=...=F_l$, then 
by Theorem \ref{general_bound}, we have $ex(n,K_3,kF)= \Theta(ex(n,K_3,F)+ex(n,F))$, for any integer $k \ge 2$. This shows that when $k$ increases from 1 to 2, the order of magnitude of $ex(n,K_3,kF)$ can increase, but from then on (i.e., for $k \ge 2$), there is no further increase.

\item The Compactness conjecture of Erd\H os and Simonovits \cite{ES1982} states that for any finite family $\cG$ of graphs  there is a $G\in \cG$ such that $ex(n,\cG)=\Theta(ex(n,G))$. It is known to be true for several classes of graphs, for example if $\cG$ contains at most one bipartite graph.

Let $ex^{sec}(n, F)$ be the second largest of the Tur\'an numbers $ex(n, F_i)$, $1 \le i \le k$. Note that if the Compactness conjecture is true (even if it is true only for families of two graphs), then $\max_{1 \le i < j\le k} ex(n,\{F_i,F_j\}) = \Theta(ex^{sec}(n, F))$. (In particular, if $F$ is non-bipartite, then this is the case.) Thus if the Compactness conjecture is true, then Theorem \ref{general_bound} can be stated as follows:

$$ex(n,K_3,F)=\Theta\left(\max_{1 \le i \le k} \{ex(n,K_3,F_i)\}+ex^{sec}(n, F)\right).$$

\end{itemize}

%the phenomenon observed by Alon and Shikhelman about the change of the order of the generalized Tur\'an number by forbidding disjoint copies of a graph can only occur once.

%Indeed, let $F_i$'s be ordered so that if $1 \le i < j \le l$ then $ex(n, F_i) \le ex(n, F_j)$; then $ex^{sec}(n, F) = ex(n, F_{l-1})$. Therefore, assuming the Erd\H os-Simonovits conjecture is true, we have $$\max_{1 \le i < j\le l} ex(n,\{F_i,F_j\}) = \max_{1 \le i < j\le l} \Theta(ex(n, F_{\min \{i,j\}}))$$ $$ = \Theta(ex(n, F_{\min \{l-1,l\}})) = \Theta(ex(n, F_{l-1})) = \Theta(ex^{sec}(n, F)).$$

%let $m, m' \le l$ be distinct integers such that $\max_{i,j\le l} ex(n,\{F_i,F_j\})= ex(n,\{F_{m'},F_m\})$. If Erd\H os-Simonovits conjecture holds for $\cG = \{F_{m'},F_m\}$, then we have $ex(n,\{F_{m'},F_m\}) = \Theta(ex^{sec}(n, F))$.

%If that holds for $F_k$ and $F_m$ that give the maximum, then $\max_{i,j\le l} ex(n,\{F_i,F_j\})= ex(n,\{F_i,F_j\})= \Theta(t(F))$. 

%In particular, if $F$ is not bipartite, we have $\max_{1 \le i < j\le l} ex(n,\{F_i,F_j\}) = \Theta(ex^{sec}(n, F))$.
%for $\cG = \{F_{m'},F_m\}$, so $ex(n,\{F_{m'},F_m\}) = \Theta(ex^{sec}(n, F))$.
\end{rem}

	\noindent
	
	%(maybe put k into the notation) 
	\begin{definition}
		Let us define 
		$$ex^*(n,F) := \max_{G} \{(k-1) \abs{E(G)} + N(K_3,G) : G \text{ is an $n$-vertex $F$-free graph}\}.$$ 
	\end{definition}
    
	Note that we have $ex(n,K_3,F) \le ex^*(n,F) \le (k-1) ex(n,F)+ex(n,K_3,F)$. Let us consider an arbitrary $F$, and let $F_u$ be the graph we get by deleting the vertex $u$ from $F$.
    
    %IT WOULD BE GOOD TO HAVE A COMMENT HERE RELATING THIS NEXT THEOREM AND THE PREVIOSU THEOREM.
	
	\begin{thm}\label{2copy} Let $|V(F)|\ge 4$. Then for every $u\in V(F)$ we have $$ex^*(n-k+1,F)\le ex(n,K_3,kF) \le ex^*(n,F)+(k-1)|V(F)|ex(n,F_u)+O(n).$$ %Moreover, $ex^*(n,kF)=ex^*(n,F)+O(ex(n,F'))$.
	\end{thm}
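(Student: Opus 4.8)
The plan is to prove the two inequalities separately; the lower bound is a one-line construction and the upper bound carries all the weight.

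\emph{Lower bound.} I would fix an $F$-free graph $H$ on $n-k+1$ vertices that attains $ex^*(n-k+1,F)$, i.e.\ that maximises $(k-1)\abs{E(H)}+N(K_3,H)$, and set $G:=K_{k-1}+H$. Since $H$ is $F$-free, every copy of $F$ in $G$ must use one of the $k-1$ universal vertices, so $G$ contains no $k$ pairwise disjoint copies of $F$. The triangles of $G$ include the $N(K_3,H)$ triangles lying in $H$ together with the $(k-1)\abs{E(H)}$ triangles formed by a universal vertex and an edge of $H$, and these are all distinct; hence $ex(n,K_3,kF)\ge N(K_3,G)\ge (k-1)\abs{E(H)}+N(K_3,H)=ex^*(n-k+1,F)$.

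\emph{Upper bound: setup and reduction.} Let $G$ be a $kF$-free graph on $n$ vertices with $ex(n,K_3,kF)$ triangles, and take a maximum family $\cC=\{F^{(1)},\dots,F^{(m)}\}$ of vertex-disjoint copies of $F$, so $m\le k-1$; I expect to need to choose $\cC$ extremally for a secondary invariant (discussed below). Put $V_i:=V(F^{(i)})$, $L:=L(G)=\bigcup_i V_i$, $R:=R(G)=V(G)\setminus L$, so $\abs{L}=m\abs{V(F)}\le (k-1)\abs{V(F)}$, the graph $G_R:=G[R]$ is $F$-free, and by maximality of $\cC$ each $G[V_i\cup R]$ is $2F$-free. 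Now classify the triangles of $G$ by how many vertices they have in $L$: there are $O(n)$ with at least two vertices in $L$ (since $\abs{L}=O(1)$), there are $N(K_3,G_R)$ with no vertex in $L$, and padding $G_R$ to an $n$-vertex $F$-free graph gives $N(K_3,G_R)+(k-1)\abs{E(G_R)}\le ex^*(n,F)$ by the definition of $ex^*$. The number of triangles with exactly one vertex in $L$ equals $\sum_{v\in L}e\big(G[N(v)\cap R]\big)$, so it suffices to prove
$$\sum_{v\in L}e\big(G[N(v)\cap R]\big)\ \le\ (k-1)\abs{E(G_R)}+(k-1)\abs{V(F)}\,ex(n,F_u)+O(n).$$

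\emph{The key step.} I would call $v\in L$ \emph{good} if $G[N(v)\cap R]$ is $F_u$-free and \emph{bad} otherwise. A good vertex contributes at most $ex(n,F_u)$, so all good vertices together contribute at most $\abs{L}\,ex(n,F_u)\le (k-1)\abs{V(F)}\,ex(n,F_u)$, which is within budget. For a bad vertex $v\in V_i$, a copy of $F_u$ inside $G[N(v)\cap R]$ together with $v$ (playing the role of $u$, noting $v$ is adjacent to everything in $N(v)\cap R$) forms a copy $C_v$ of $F$ meeting $F^{(i)}$ exactly in $v$ and otherwise contained in $R$; feeding $C_v$ back into $\cC$ and using that $G[V_i\cup R]$ is $2F$-free and $\cC$ is maximum should force the neighbourhoods $N(v)\cap R$ over bad $v$ to overlap the edge set of $G_R$ very little — concretely, the goal is $\sum_{v\text{ bad}}e\big(G[N(v)\cap R]\big)=\sum_{xy\in E(G_R)}\abs{N(x)\cap N(y)\cap\{\text{bad vertices}\}}\le (k-1)\abs{E(G_R)}+O(n)$, i.e.\ all but $O(n)$ edges of $G_R$ have at most $k-1$ bad common neighbours. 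Adding the good and bad contributions would then give the displayed bound.

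\emph{Main obstacle.} The construction, the partition of triangles by $\abs{V\cap L}$, and the good-vertex estimate are routine; the hard part is the bad-vertex estimate. Getting it to work will require pinning down the correct secondary invariant to extremise when choosing $\cC$ — intuitively so that $L$ absorbs the densest region of $G$, leaving $G_R$'s edges with few "heavy" common neighbours in $L$ — together with a careful rerouting argument that turns a bad vertex plus an embedded $F_u$ into an extra disjoint copy of $F$, contradicting maximality of $\cC$. One must also handle degenerate $F_u$ (for instance when $u$ has small degree in $F$), which is presumably the reason for the hypothesis $\abs{V(F)}\ge 4$.
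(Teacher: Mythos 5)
Your lower bound and your classification of triangles by $\abs{V\cap L(G)}$ match the paper, but the entire difficulty of the theorem sits in the step you leave open, and the intermediate inequality you propose there is not just unproven — it is false. Take $F=K_4$, $k=2$, let $L(G)$ span a $K_4$ containing two vertices $a,b$, and let $R(G)$ consist of a vertex $w$ joined completely to a balanced complete bipartite graph, with both $a$ and $b$ joined to all of $R(G)$ (and the other two vertices of $L(G)$ having no neighbours in $R$). Every triangle of $G_R$ contains $w$, so any two copies of $K_4$ of the form $a\cup T$, $b\cup T'$ with $T,T'\subseteq R$ intersect, and one checks the graph is $2K_4$-free; yet both $a$ and $b$ are ``bad'' in your sense and every one of the $\Theta(n^2)$ edges of $G_R$ has two bad common neighbours, so $\sum_{v \text{ bad}} e\big(G[N(v)\cap R]\big)\approx 2\abs{E(G_R)}$, violating your target bound $(k-1)\abs{E(G_R)}+O(n)$. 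The point is that the good/bad dichotomy (neighbourhood $F_u$-free or not) is too coarse: a vertex whose neighbourhood contains a single copy of $F_u$ can still have $\Theta(ex(n,F_u))$ edges there, and such vertices must be charged to the $ex(n,F_u)$ term, not to the $\abs{E(G_R)}$ term. No ``secondary invariant'' in the choice of $\cC$ can repair this, because in the example above $\cC$ is forced.

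The paper's proof supplies exactly the missing mechanism. It introduces the graph $F^*$ obtained by gluing $(k-1)\abs{V(F)}+1$ copies of $F$ at the vertex $u$, and argues by induction on $k$ with a case split: if $G$ contains $k-1$ disjoint copies of $F^*$, then every copy of $F$ must pass through one of their centers, so deleting the $k-1$ centers leaves an $F$-free graph and the count is at most $ex^*(n-k+1,F)+O(n)$; otherwise, taking a maximum family $H$ of disjoint copies of $F^*$ with centers in $L(G)$ and bodies in $R(G)$ (at most $k-2$ of them), any vertex $x\in L(G)$ that is not such a center has at most $(k-1)\abs{V(F)}$ disjoint copies of $F_u$ in its neighbourhood inside $R(G)\setminus V(H)$, whence that neighbourhood spans at most $ex\big(n,((k-1)\abs{V(F)}+1)F_u\big)=ex(n,F_u)+O(n)$ edges by Gorgol's theorem (Theorem \ref{Gorgol}); only the at most $k-2$ centers are charged $\abs{E(G_R)}$ each. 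The quantitative tool you are missing is precisely this use of Gorgol's theorem — bounding the edge count of a neighbourhood that merely has \emph{boundedly many disjoint} copies of $F_u$, rather than requiring it to be $F_u$-free — combined with the $F^*$ gadget to limit how many vertices of $L(G)$ can escape that bound. Without these two ideas (or a genuine substitute), the proposal does not yield the stated upper bound.
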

	
	\begin{proof}
		
		For the lower bound of $ex(n,K_3,kF)$, take an $F$-free graph $G$ on $n-k+1$ vertices for which $(k-1) \abs{E(G)} + N(K_3,G)$ is maximum, and consider $K_{k-1}+G$. Then every edge of $G$, together with the $k-1$ universal vertices, gives $k-1$ triangles. This shows $ex(n,K_3,kF)\ge ex^*(n-k+1,F)$. 
		
		For the upper bound we use induction on $k$, the case $k=1$ is trivial. If a $kF$-free graph $G$ does not contain $(k-1)F$, then we are done by induction. So we may assume $G$ contains a copy of $(k-1)F$.
		
		Recall that $F_u$ is the graph obtained by removing the vertex $u$ from $F$. Let $X(u)$ be the set of neighbors of $u$ in $F$. Let $F^*$ be the graph we get by taking $(k-1)|V(F)|+1$ vertex disjoint copies of $F_u$ and an additional vertex $v$ (we call it the \textit{center} of $F^*$), that is connected to all the vertices in $X(u)$ in each copy of $F_u$. In other words we take $(k-1)|V(F)|+1$ vertex-disjoint copies of $F$ and identify the vertices $u$ from each copy of $F$. So $F^*$ is \emph{created} by $(k-1)|V(F)|+1$ copies of $F$ that intersect only in the center. Observe that if we are given a copy $F_0$ of $F$ and a copy of $F^*$, such that $F_0$ does not contain the center of the $F^*$, then at least one of the copies of $F$ which create $F^*$ is disjoint from $F_0$.
		
		%Assume $G$ contains $F^*$. Then every copy of $(k-1)F$ must intersect all of the copies of $F$ in $F^*$. Since a copy of $(k-1)F$ has only $(k-1)|V(F)|$ vertices, it follows that each copy of $F$ in $F^*$ must contain the center of $F^*$. 
		
		Now let us assume $G$ contains $k-1$ vertex-disjoint copies of $F^*$ and let $v_1,\dots, v_{k-1}$ be their centers. In that case every copy $F_0$ of $F$ contains at least one of the centers. Indeed, otherwise each of the $k-1$ copies of $F^*$ contain a copy of $F$ that is disjoint from $F_0$, and these copies are all disjoint from each other. Therefore they form a copy of $kF$, a contradiction. Thus by deleting $v_1, \dots, v_{k-1}$ we get an $F$-free graph $G'$, and $ex(n,K_3,kF)$ is at most the number of triangles in $G'$ plus the number of triangles $t_i$ with exactly $i$ vertices from $\{v_1, \dots, v_{k-1}\}$ for each $1 \le i \le 3$. $t_1$ is at most  $k-1$ times the number of edges in $G'$, whereas $t_2 + t_3 = O(n)$. Therefore, the total number of triangles in $G$ is at most $ex^*(n-k+1,F) + O(n)$, as desired. 
		%WELL, THIS IS NOT ENOUGH FOR $k>2$. MAYBE THIS WAY
		%$\le ex(n-1,K_3,(k-1)F)+ex(n-1,(k-1)F)\le ex(n,K_3,(k-1)F)+ex(n,F)+O(n)$. 
		
		So we may assume $G$ does not contain $(k-1)F^*$. Let us consider the canonical partition, i.e. a copy $G_L$ of $(k-1)F$ and  the graph $G_R$ induced by the remaining vertices. Then there are $O(n)$ triangles containing $0$ or $1$ vertices from $R(G)$, and $N(K_3,G_R)$ triangles containing $3$ vertices from $R(G)$. It remains to bound the number of triangles which contain exactly one vertex from the $(k-1)|V(F)|$ vertices not in $R(G)$.
		
		Let us consider the subgraph $H$ which consists of the largest number (which is at most $(k-2)$) of vertex-disjoint copies of $F^*$ where each copy has its center in $L(G)$ and all its other vertices in $R(G)$. Let $x$ be a vertex in $L(G)$ that does not belong to $H$ and consider the graph $H_0$ induced by its neighborhood in $R(G)\setminus V(H)$. Observe that $H_0$ cannot contain more than $(k-1)|V(F)|+1$ vertex-disjoint copies of $F_u$. Thus the number of edges in $H_0$ is at most $$ex(n,((k-1)|V(F)|+1)F_u)=ex(n,F_u)+O(n)$$ using Theorem \ref{Gorgol}. In addition the number of edges incident to $R(G)\cap V(H)$ is at most $$(k-2)((k-1)|V(F)|+1)|V(F)|n=O(n).$$ Thus $x$ is in $ex(n,F_u)+O(n)$ triangles. There are at most $(k-1)|V(F)|$ vertices in $L(G)$. Moreover, there are at most $k-2$ vertices in $L(G)$ that belong to $H$, and each of them is in at most $|E(G_R)|$ triangles. Therefore, the number of triangles which contain exactly one vertex from the $(k-1)|V(F)|$ vertices not in $R(G)$ is at most $(k-2)|E(G_R)|+(k-1)|V(F)|ex(n,F_u)+O(n)$.
        
        Therefore, the total number of triangles in $G$ is at most $$N(K_3,G_R)+ (k-2)|E(G_R)|+(k-1)|V(F)|ex(n,F_u)+O(n)$$ $$\le ex^*(n,F)+(k-1)|V(F)|ex(n,F_u)+O(n),$$ completing the proof.
        
       % $k-2$ times the edges in $G_R$ plus the number of triangles in $G_R$ plus $(k-1)|V(F)|ex(n,F_u)+O(n)$ triangles.

		%Now observe that as $G$ does not contain $F^*$, the neighborhood of a vertex does not contain more than .  for every vertex, so the number of the remaining triangles is at most $(k-1)|V(F)|ex(n,(k-1)|V(F)|F')=O(ex(n,F'))$. 

		%Let us assume $G$ contains $F^*$. In case $G$ does not contain a copy of $F$, we have at most $ex(n,K_3,F)\le ex^*(n,F)$ triangles in $G$, thus we can assume $F$ contains a copy of $F$. Let $G'$ be the graph induced by the remaining vertices. Then there are $O(n)$ triangles containing $0$ or $1$ vertices from $G'$, and at most $ex(n,K_3,F)$ triangles containing $3$ vertices from $G'$. The remaining triangles contain a vertex from the $|V(F)|$ vertices not in $G'$, and an edge in their neighborhood. Now observe that as $G$ does not contain $F^*$, the neighborhood of a vertex does not contain more than $|V(F)|$ VERTEX DISJOINT copies of $F'$. Thus the number of such edges is at most $ex(n,|V(F)|F')$ for every vertex, so the number of the remaining triangles is at most $|V(F)|ex(n,|V(F)|F')=O(ex(n,F'))$. 
	\end{proof}
	
	\begin{rem} 
		
		Note that the proof shows a stronger upper bound. In one of the two cases we get an upper bound matching the lower bound. In the other case we obtain an upper bound of the form $(k-2) \abs{E(G)} + N(K_3,G)$ rather than $(k-1) \abs{E(G)} + N(K_3,G)$ as in the definition of $ex^*(n,F)$. In case $ex(n,F_u)$ has smaller order of magnitude than $ex(n,F)$ and $n$ is large enough, this implies that $ex(n,K_3,kF)= ex^*(n-k+1,F)$.
		%CHECK THE ABOVE SENTENCE!!!!!!!!!
		
	\end{rem}
	
	Note that if $F_u$ is not a forest, we also know $ex(n,|V(F)|F_u)=(1+o(1))ex(n,F_u)$ by Theorem \ref{Gorgol}. Theorem \ref{2copy} shows that if $ex(n,F_u)=o(ex^*(n,F))$, then we have 
	
	\begin{equation}
		\label{fu}
		ex(n,K_3,kF)=(1+o(1))ex^*(n,F).
	\end{equation}
	This implies the following.
	
	\begin{cor} 
		\begin{enumerate}
			\item[(a)] If $F$ has chromatic number $r > 3$, then we have $$ex(n,K_3,kF)=(1+o(1))\binom{r-1}{3}\left(\frac{n}{r-1}\right)^3.$$
			
			\item[(b)] For $k \ge 1$, we have $$ex(n,K_3,kK_{2,t})=(1+o(1)) \left(\frac{(k-1)(t-1)^{1/2}}{2}+\frac{(t-1)^{3/2}}{6}\right)n^{3/2}.$$
		\end{enumerate}
	\end{cor}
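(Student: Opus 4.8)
The plan is to deduce both parts from \eqref{fu}. In each case I would pick a vertex $u\in V(F)$ with $ex(n,F_u)=o(ex^*(n,F))$ and then determine $ex^*(n,F)$ asymptotically; \eqref{fu} then gives $ex(n,K_3,kF)=(1+o(1))ex^*(n,F)$, from which the stated formula is read off (for $k=1$ one simply has $ex^*(n,F)=ex(n,K_3,F)$, and the same computation of $ex^*$ settles the case directly). In both parts Theorem \ref{2copy} applies because $|V(F)|\ge 4$: in (a) the hypothesis $\chi(F)=r>3$ forces $|V(F)|\ge r\ge 4$, and in (b) we have $|V(K_{2,t})|=t+2\ge 4$ once $t\ge 2$. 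The case $t=1$ is degenerate (then $K_{2,1}$ is a path on three vertices and the right-hand side of the claimed formula vanishes), so I would assume $t\ge 2$ throughout part (b).

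For part (a), any choice of $u$ works, since $ex(n,F_u)\le\binom n2=O(n^2)$ while $ex^*(n,F)$ will turn out to be $\Theta(n^3)$; so it suffices to show $ex^*(n,F)=(1+o(1))\binom{r-1}{3}\left(\tfrac n{r-1}\right)^3$. For the lower bound I would take the Tur\'an graph $T_{r-1}(n)$, which is $F$-free since $\chi(F)=r$ and which already contains $(1+o(1))\binom{r-1}{3}\left(\tfrac n{r-1}\right)^3$ triangles. For the upper bound I would use the inequality $ex^*(n,F)\le (k-1)ex(n,F)+ex(n,K_3,F)$ noted before Theorem \ref{2copy}: here $ex(n,F)=O(n^2)$ by the Erd\H os--Stone--Simonovits theorem, while $ex(n,K_3,F)=(1+o(1))\binom{r-1}{3}\left(\tfrac n{r-1}\right)^3$ by the Erd\H os--Stone-type result for clique counts (which extends Proposition \ref{Erdosprop} from complete graphs to arbitrary $F$ of chromatic number $r$; see \cite{ALS2016}). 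Combining these, $ex^*(n,F)=(1+o(1))\binom{r-1}{3}\left(\tfrac n{r-1}\right)^3$, and then \eqref{fu} completes part (a).

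For part (b) I would take $u$ in the part of size $2$ of $K_{2,t}$, so that $F_u=K_{1,t}$ is a star and $ex(n,F_u)\le\tfrac{(t-1)n}{2}=O(n)$. The goal becomes $ex^*(n,K_{2,t})=(1+o(1))\left(\tfrac{(k-1)(t-1)^{1/2}}{2}+\tfrac{(t-1)^{3/2}}{6}\right)n^{3/2}$, which is $\Omega(n^{3/2})$, so that $ex(n,F_u)=o(ex^*(n,K_{2,t}))$ and \eqref{fu} applies. For the upper bound: in a $K_{2,t}$-free graph $G$ any two vertices have at most $t-1$ common neighbours, so every edge lies in at most $t-1$ triangles and hence $3N(K_3,G)\le(t-1)|E(G)|$; together with the (asymptotically tight) K\H{o}v\'ari--S\'os--Tur\'an bound $|E(G)|\le ex(n,K_{2,t})=(1+o(1))\tfrac{(t-1)^{1/2}}{2}n^{3/2}$ this gives
\[
(k-1)|E(G)|+N(K_3,G)\le\Bigl(k-1+\tfrac{t-1}{3}\Bigr)|E(G)|\le(1+o(1))\Bigl(\tfrac{(k-1)(t-1)^{1/2}}{2}+\tfrac{(t-1)^{3/2}}{6}\Bigr)n^{3/2}.
\]
For the matching lower bound I would exhibit a $K_{2,t}$-free graph on $n$ vertices with $(1-o(1))\tfrac{(t-1)^{1/2}}{2}n^{3/2}$ edges, all but $O(n)$ of which lie in exactly $t-1$ triangles; such a graph then has $(1-o(1))\tfrac{(t-1)^{3/2}}{6}n^{3/2}$ triangles, so $(k-1)|E(G)|+N(K_3,G)$ matches the upper bound. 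For $t=2$ the Erd\H os--R\'enyi polarity graph works: it is $C_4$-free with $(1-o(1))\tfrac12 n^{3/2}$ edges, and for an adjacent pair $x,y$ the polar lines of $x$ and of $y$ meet in a single point, which is automatically adjacent to both $x$ and $y$, so all but $O(n)$ edges lie in exactly one triangle. For general $t$ one would use the analogous algebraic ``generalized polarity'' construction realizing $ex(n,K_{2,t})$, for which the same computation gives all but $O(n)$ edges in exactly $t-1$ triangles; alternatively one may quote the known asymptotics of $ex(n,K_3,K_{2,t})$ together with the fact that its extremal graph also has $(1-o(1))\,ex(n,K_{2,t})$ edges.

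The upper bounds and the applications of \eqref{fu} are routine bookkeeping. The hard part will be the lower-bound construction in part (b): producing, for each $t$, a single $K_{2,t}$-free graph that is at once asymptotically extremal for the number of edges and triangle-rich with the exact constant $\tfrac{(t-1)^{3/2}}{6}$. The edge count is classical, but the precise triangle count relies on the algebraic structure of the construction (generalizing the enumeration of self-polar triangles in the Erd\H os--R\'enyi graph), and this is the step I expect to require the most care.
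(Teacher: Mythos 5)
Your proposal is correct and follows essentially the same route as the paper: both parts are reduced via \eqref{fu} (Theorem \ref{2copy}) to determining $ex^*(n,F)$ asymptotically, using the Alon--Shikhelman asymptotics of $ex(n,K_3,F)$ in (a), and in (b) the fact that F\"uredi's $K_{2,t}$-free graph is simultaneously asymptotically extremal for edges and for triangles, which is exactly your ``alternative'' citation route. The only difference is cosmetic: you re-derive the upper bound in (b) by the edge/triangle double count and contemplate re-proving the triangle count in the polarity-type construction, whereas the paper simply cites \cite{ALS2016} and \cite{Furedik2t}.
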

	
	\begin{proof}
		First let us prove (a). For any vertex $u \in V(F)$, trivially $ex(n,F_u) = O(n^2)$. Alon and Shikhelman \cite{ALS2016} showed that $ex(n,K_3,F)=(1+o(1))\binom{r-1}{3}\left(\frac{n}{r-1}\right)^3$ which is equal to $ex^*(n,F)$ asymptotically. Thus $ex(n,F_u)=o(ex^*(n,F))$, so by \eqref{fu}, we are done.
	
    \
		
        Now we prove (b). Alon and Shikhelman \cite{ALS2016} showed that $$ex(n,K_3,K_{2,t})=(1+o(1))\frac{(t-1)^{3/2}}{6}n^{3/2}.$$ In fact, they establish the lower bound $ex(n,K_3,K_{2,t}) \ge (1+o(1))\frac{(t-1)^{3/2}}{6}n^{3/2}$ by considering the $K_{2,t}$-free graph constructed by F\"uredi \cite{Furedik2t} and counting the number of triangles in it. This graph contains $\frac{(t-1)^{1/2}}{2}n^{3/2}(1+o(1)) = ex(n, K_{2,t})$ edges, so it follows that $$ex^*(n,K_{2,t}) \ge (1+o(1)) \left(\frac{(k-1)(t-1)^{1/2}}{2}+\frac{(t-1)^{3/2}}{6}\right)n^{3/2}$$ and this bound is sharp since  by definition, $ex^*(n,K_{2,t}) \le ex(n,K_3,K_{2,t}) + (k-1) ex(n, K_{2,t})$. 
		Now it can be easily seen that $ex(n,F_u)=O(n)=o(ex^*(n,F))$ if $F = K_{2,t}$ for some $u$. So, by \eqref{fu} again, the proof is complete.
		
		% The proof of b) is F\"uredi, Alon-Shikhelman
		% (explain more ?????? yes, do it)
		
	\end{proof}
	
	\subsection{Counting complete graphs}
	
	\begin{thm}\label{compl1} Let $F$ be a graph and $k,r$ be integers. Let $\max_{m\le r}(ex(n,K_m,F)) = ex(n,K_{m_0},F)$. Then we have $$ex(n,K_r,kF) = O(ex(n,K_{m_0},F)).$$ Moreover, if $k> r-m_0$, then $$ex(n,K_r,kF) = \Theta(ex(n,K_{m_0},F)).$$
		
	\end{thm}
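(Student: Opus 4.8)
The plan is to prove the upper bound for every $k$ and the matching lower bound when $k > r - m_0$, using the canonical $(k,F)$-partition for the former and an ``add universal vertices'' construction for the latter, in the spirit of Theorems~\ref{arbit} and~\ref{general_bound}.

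For the upper bound, let $G$ be a $kF$-free graph on $n$ vertices and fix a canonical $(k,F)$-partition $V(G) = L(G) \cup R(G)$. Since $G$ is $kF$-free, $G_L$ is a union of at most $k-1$ disjoint copies of $F$, so $|L(G)| \le (k-1)|V(F)| = O(1)$, while $G_R$ is $F$-free. I would classify each copy of $K_r$ in $G$ by the number $j \in \{0,1,\dots,\min\{r,|L(G)|\}\}$ of its vertices lying in $L(G)$: there are at most $\binom{|L(G)|}{j} = O(1)$ choices for those vertices, and the remaining $r-j$ vertices span a copy of $K_{r-j}$ in the $F$-free graph $G_R$. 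Hence, for $r - j \ge 1$, the number of copies of $K_r$ with this value of $j$ is at most $O(ex(|R(G)|, K_{r-j}, F)) = O(ex(n, K_{r-j}, F)) = O(ex(n, K_{m_0}, F))$ (using monotonicity of $ex$ in $n$ together with $r - j \le r$ and the definition of $m_0$), while for $j = r$ it is only $O(1)$. Summing over the $O(1)$ possible values of $j$ and absorbing the $O(1)$ term into $ex(n, K_{m_0}, F) \ge ex(n, K_1, F) = n$ gives $ex(n, K_r, kF) = O(ex(n, K_{m_0}, F))$.

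For the lower bound I assume $k > r - m_0$ (note $r - m_0 \ge 0$ since $m_0 \le r$). Take an $F$-free graph $G_0$ on $n - (r - m_0)$ vertices with $ex(n - (r-m_0), K_{m_0}, F)$ copies of $K_{m_0}$ and set $G := K_{r-m_0} + G_0$. Every copy of $F$ in $G$ must use one of the $r - m_0$ universal vertices (because $G_0$ is $F$-free), so $G$ contains no $r - m_0 + 1$ pairwise disjoint copies of $F$; since $k \ge r - m_0 + 1$, the graph $G$ is $kF$-free. Moreover, the $r - m_0$ universal vertices together with any copy of $K_{m_0}$ inside $G_0$ form a copy of $K_r$, so $G$ has at least $ex(n - (r-m_0), K_{m_0}, F)$ copies of $K_r$. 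Finally, the random-subset averaging argument used in the proof of Theorem~\ref{arbit} (valid because $m_0$ and $r - m_0$ are constants) gives $ex(n - (r-m_0), K_{m_0}, F) = (1 - o(1))\, ex(n, K_{m_0}, F)$, which yields $ex(n, K_r, kF) = \Theta(ex(n, K_{m_0}, F))$.

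The argument is mostly mechanical once the canonical partition is available; the points needing care are the degenerate term $j = r$ in the upper bound (harmless, being $O(1)$), choosing the right number $r - m_0$ of universal vertices so that the construction is $kF$-free exactly under the hypothesis $k > r - m_0$ (the slack being that $kF$ needs $k$ disjoint copies of $F$ but only $r - m_0$ are available), and the routine passage $ex(n-c, K_{m_0}, F) = \Omega(ex(n, K_{m_0}, F))$ for constant $c$. The only genuinely conceptual step is recognizing that the index $m_0$ achieving $\max_{m \le r} ex(n, K_m, F)$ is the governing quantity: in a $kF$-free graph a copy of $K_r$ can throw at most $O(1)$ of its vertices onto the ``universal-type'' part, so its count is controlled by the largest clique-count among $K_1, \dots, K_r$ in an $F$-free host.
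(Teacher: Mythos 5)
Your proposal is correct and follows essentially the same route as the paper: the upper bound via the canonical $(k,F)$-partition, classifying copies of $K_r$ by how many vertices land in $L(G)$ (an $O(1)$-size set) and bounding the rest by $ex(n,K_{m},F)\le ex(n,K_{m_0},F)$, and the lower bound via $K_{r-m_0}$ joined to an extremal $F$-free graph, which is $kF$-free precisely because $k>r-m_0$. The extra details you supply (the degenerate $j=r$ term and the passage from $n-(r-m_0)$ to $n$ vertices) are routine refinements of the same argument.
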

	
	\begin{proof} Let us consider a $kF$-free graph $G$, and its canonical partition. Then every copy of $K_r$ consists of $m$ vertices in $R(G)$ and $r-m$ vertices in $L(G)$ for some integer $m \le k$. These latter ones can be chosen in at most $\binom{k|V(F)|}{r-m}=O(1)$ ways, thus we have $$ex(n,K_r,kF) =O(\sum_{m\le r} ex(n,K_{m},F))\le  O(ex(n,K_{m_0},F)).$$
		
		Let us now consider the graph on $n-r+m_0$ vertices that contains the most copies of $K_{m_0}$, and add $r-m_0$ universal vertices. The resulting graph contains $\Omega(ex(n,K_{m_0},F))$ copies of $K_r$. On the other hand, every copy of $F$ contains at least one of the additional vertices, thus there are at most $r-m_0<k$ pairwise vertex-disjoint copies of $F$ in it.
		
	\end{proof}
	
	%\subsection{$ex(n,F,kF)$}

	%\section{Forbidding complete bipartite graphs}
	
	%\subsection{$ex(n,K_r,kK_{s,t})$}
	
	%\subsection{$ex(n,K_{a,b},kK_{s,t})$}

	\section{Forbidding complete graphs}
	
	As we mentioned in the introduction, Erd\H{o}s \cite{E1962} determined the exact value of $ex(n,K_s,K_t)$ for $s < t$ and Simonovits \cite{S1966} determined the exact value of $ex(n,kK_t)$ for sufficiently large $n$. 
	
	In this section we investigate the function $ex(n, K_s, kK_t)$. First we present our main result of this section, which determines the order of magnitude for every $s$, $t$ and $k$. Then we show two asymptotic results (for special values of $s$ and $t$).
	
	\subsection{$ex(n, K_s, kK_t)$}
	
	%Note that this theorem covers the case $t=s$ for $k>1$. (DO WE NEED THIS REMARK???)
	
	The following theorem determines the order of magnitude of $ex(n,K_s,kK_t)$ for all $s,t$ and $k$ (as $n$ tends to infinity). Note that if $s<t$, then the Tur\'an graph shows $ex(n,K_s,kK_t)=\Theta(n^s)$.
	
	\begin{thm}\label{completegen} Let $s \ge t \ge 2$ and $k \ge 1$ be arbitrary integers and let $x:=\lceil \frac{kt-s}{k-1}\rceil-1$. Then we have $$ex(n,K_s,kK_t)=\Theta(n^x).$$
		
	\end{thm}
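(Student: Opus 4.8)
The plan is to prove matching upper and lower bounds of order $n^x$, where $x = \lceil \frac{kt-s}{k-1} \rceil - 1$.

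\textbf{Lower bound.} The idea is to use the standard construction $K_{k-1} + G$ where $G$ is $K_t$-free, but we need to choose $G$ carefully and count copies of $K_s$ that split between the universal vertices and $G$. A copy of $K_s$ in $K_{k-1}+G$ uses some $j$ vertices of the $K_{k-1}$ part (for $0 \le j \le k-1$) and $s-j$ vertices forming a clique in $G$; since $G$ is $K_t$-free we need $s - j \le t-1$, i.e. $j \ge s - t + 1$. The number of such copies is $\binom{k-1}{j} \cdot N(K_{s-j}, G)$. Taking $G$ to be the Tur\'an graph $T_{t-1}(n-k+1)$ gives $N(K_{s-j},G) = \Theta(n^{\min(s-j,\, t-1)})$. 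So the total count is $\Theta(n^{\max_j \min(s-j, t-1)})$ over $j$ with $s-t+1 \le j \le k-1$; the exponent is maximized by taking $j$ as small as possible, i.e. $j = \max(0, s-t+1)$, giving exponent $\min(s, t-1)$ when $s \le t$ (but here $s \ge t$), or exponent $t-1$ — wait, this only gives $n^{t-1}$, which is not $n^x$ in general. The correct construction is more subtle: instead of the Tur\'an graph on all $n-k+1$ vertices, one partitions those vertices into the $k-1$ universal vertices replaced by a more balanced structure. Actually the right approach is to \emph{iterate}: build a graph on $n$ vertices consisting of $k-1$ "blocks" each of size roughly $n/(k-1)$, make each block a clique... no. Let me reconsider: the extremal $kK_t$-free graph should be $K_{k-1} + H$ where $H$ is chosen so that within $H$ we count $K_{s-k+1}$'s, and $H$ itself is $K_t$-free; to maximize $N(K_{s-k+1}, H)$ over $K_t$-free $H$, by Proposition~\ref{Erdosprop} (Erd\H os) we get $\Theta(n^{s-k+1})$ if $s-k+1 < t$, using $T_{t-1}(n)$. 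So the exponent from this single construction is $\min(s-k+1, ?)$; one checks $\lceil \frac{kt-s}{k-1}\rceil - 1 = \lfloor \frac{(k-1)(t-1) - (s-t) + (k-2)}{k-1}\rfloor$-type arithmetic, and indeed for the balanced construction where we split into $k-1$ universal vertices plus a Tur\'an graph the count is governed by choosing, in each of several groups, how many vertices to take; the optimum distributes the $s$ clique-vertices as evenly as possible across $k-1+1 = k$ "parts" (the $k-1$ trivial parts plus merging), subject to at most $t-1$ per part. Concretely: take $K_t$-free graph = complete $(t-1)$-partite; with the $k-1$ universal vertices this is complete multipartite with $t-1$ infinite parts and $k-1$ singleton parts, and an $s$-clique picks at most $t-1$ from each big part and at most $1$ from each singleton — to maximize we want many vertices in big parts, but each contributes a factor $n$, so we take $\min(s, (k-1)(t-1))$... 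I would carefully optimize and show the max is exactly $n^x$.

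\textbf{Upper bound.} Here I would take a $kK_t$-free graph $G$ with its canonical $(k, K_t)$-partition: $L(G)$ spans a maximal collection of disjoint $K_t$'s (so $|L(G)| \le (k-1)t = O(1)$ since there are at most $k-1$ disjoint copies), and $G_R = G[R(G)]$ is $K_t$-free. Every copy of $K_s$ in $G$ uses $j$ vertices from $L(G)$ and $s-j$ from $R(G)$ forming a $K_{s-j}$; the $L(G)$ part is chosen in $O(1)$ ways, so $N(K_s, G) = O(\sum_j N(K_{s-j}, G_R)) = O(\max_j N(K_{s-j}, G_R))$. Since $G_R$ is $K_t$-free, $N(K_{s-j}, G_R) = O(n^{\min(s-j, t-1)})$ — but this is too lossy and gives only $n^{t-1}$. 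The key refinement: we cannot just use that $G_R$ is $K_t$-free; we must use that the $L(G)$ vertices together with $G_R$ are highly constrained. The real argument: if some vertex $v \in L(G)$ had a $K_{t-1}$ in its $R(G)$-neighborhood that is disjoint from the rest of the structure, we could enlarge the packing. So the neighborhoods impose that $G_R$ together with the $L$-vertices behaves like a graph that is "$K_t$-free with multiplicity," and a counting over how the $s$-clique distributes among $\le k$ groups, each contributing a clique of size $\le t-1$ to avoid creating $k$ disjoint $K_t$'s, yields exponent $x$. I would prove: in \emph{any} $kK_t$-free $G$, for the number of $K_s$-copies, an averaging/cleaning argument shows it is $O(n^x)$ by induction on $k$ — delete a vertex of a copy of $(k-1)K_t$ as in the proof of Theorem~\ref{2copy}, splitting into the case $G$ is $(k-1)K_t$-free (induction) and the case it is not (then removing $O(1)$ vertices from copies of an auxiliary "$K_t^*$" gadget makes it $K_t$-free and controls the count).

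\textbf{Main obstacle.} The arithmetic identifying $\max_j \min(\cdots) $, or more precisely the combinatorial optimization of how to split $s$ clique-vertices among the universal vertices and the parts of a $K_t$-free multipartite graph, is exactly $x = \lceil \frac{kt-s}{k-1}\rceil - 1$; getting this clean closed form and matching it on both sides is the crux. On the upper bound side, the genuine difficulty is that the crude bound "$G_R$ is $K_t$-free" loses too much, and one must exploit maximality of the packing to bound how large a clique can sit in the common neighborhood of a set of $L$-vertices — this is where the induction on $k$ together with an auxiliary gadget (analogous to $F^*$ in the proof of Theorem~\ref{2copy}) is needed. I expect this maximality-exploitation step to be the main technical hurdle.
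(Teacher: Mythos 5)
Your proposal has genuine gaps on both sides. On the lower bound, every construction you consider lives in the family ``$k-1$ universal vertices joined to a $K_t$-free graph,'' and that family cannot achieve the bound: a clique takes at most \emph{one} vertex from each part of a complete multipartite graph (not ``at most $t-1$ from each big part,'' as you write), so $K_{k-1}+T_{t-1}(n-k+1)$ yields exponent $t-1$ when $s\le t+k-2$ and contains \emph{no} copy of $K_s$ at all once $s>t+k-2$ --- exactly the regime where $x<t-1$ and the theorem still claims $\Theta(n^x)$ with $x\ge 1$ possible (e.g.\ $k=3$, $t=3$, $s=5$ gives $x=1$). The correct construction is $K_{s-x}+T_x(n-s+x)$: the number of universal vertices is $s-x$ (in general much larger than $k-1$), and the inner graph must be $K_{x+1}$-free, not merely $K_t$-free; here $x$ is precisely the largest integer with $s+(k-1)x<kt$, which is what makes the graph $kK_t$-free, and the $\Omega(n^x)$ copies of $K_x$ in $T_x$ extend to $K_s$ through the $s-x$ universal vertices. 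Your closing remark that you ``would carefully optimize and show the max is exactly $n^x$'' cannot succeed inside the family you set up, so this is a missing idea, not a deferred computation.

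On the upper bound you correctly diagnose that ``$G_R$ is $K_t$-free'' alone only gives $O(n^{t-1})$, but the route you then sketch does not close the gap. Induction on $k$ with a $K_t^*$-gadget handles only the case where $G$ contains $k-1$ \emph{disjoint} gadgets (there every $K_s$ meets the $k-1$ centers in at least $s-t+1$ of its vertices, and the count is $O(n^{t-1})$ or zero, which happens to be fine); in the complementary case --- $G$ contains $(k-1)K_t$ but not $k-1$ disjoint gadgets --- you cannot delete $O(1)$ vertices to make $G$ $K_t$-free, and a copy of $K_s$ may use up to $t$ vertices from each of the packed $K_t$'s, so the trivial count is again only $O(n^{t-1})$, which exceeds $n^x$ exactly when $s>t+k-2$. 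The mechanism the paper uses, and which is absent from your sketch, is an \emph{iterated} canonical partition: fix the intersection $X_1\subseteq L(G_1)$ of a $K_s$ with the first packing, observe that the residual $(s-|X_1|)$-cliques all lie in the common neighbourhood of $X_1$, and either fewer than $k$ of them are disjoint (recurse on this residual graph with a new packing) or $k$ disjoint ones can be completed into $k$ disjoint $K_t$'s using the vertices of $X_1\cup\dots\cup X_i$, forcing the inequality $\sum_j x_j + k\bigl(s-\sum_j x_j\bigr)<kt$ and hence $s-\sum_j x_j\le x$, so that the surviving copies number $O(n^{x})$. Your phrase ``each of $\le k$ groups contributes a clique of size $\le t-1$'' is not the operative constraint; the completion inequality above is, and you flag precisely this step as an unresolved ``main technical hurdle,'' so the upper bound is not established either.
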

	
	\begin{proof}
		%Where is Turan graph defined?
		For the lower bound, consider the Tur\'an graph $K_{s-x} + T_x(n-s+x)$. This graph is $kK_t$-free as $k$ vertex-disjoint copies of $K_t$ together contain at most $kx$ vertices from $T_x(n-s+x)$ and at most $s-x$ vertices from $K_{s-x}$. Thus they together contain at most $s+(k-1)x<kt$ vertices. On the other hand, the Tur\'an-graph $T_x(n-s+x)$ contains $\Omega(n^x)$ copies of $K_x$, and they all can be extended to different copies of $K_s$. 
		%Let us count only those copies of $K_s$ that contain  all the $s-x$ universal vertices.
		
		\vspace{2mm}
		
		\noindent
		To prove the upper bound we will repeatedly apply the canonical partition operation.
		
		\bigskip
		
		\noindent
		\textbf{Step 1:}
		
		\medskip
		\begin{itemize}
			\item[(1.1)] Consider a $kK_t$-free graph $G_1$ and its canonical $(k,K_t)$-partition. 
			
			\smallskip
			
			\item[(1.2)] Let us fix an arbitrary nonempty $X_1 \subset L(G_1)$ (of size $x_1$) and let $$\cA(X_1):=\{A : A \textrm{ is a copy of }K_s \textrm{ in } G_1 \textrm{ with } |V(A) \cap L(G_1)|=X_1 \}.$$ Note that $V(A) \cap R(G_1)$ spans a (copy of) $K_{s-x_1}$ for all $A \in \cA(X_1)$ and let $G_2$ be the subgraph of $G_1$ spanned by the union of $\{V(A) \cap R(G_1) : A \in \cA(X_1) \}$. So $G_2$ is a graph on the vertex set $R(G_1)$. We consider two cases:
		\end{itemize}
		\medskip

		\textbf{Case 1:} $G_2$ contains $k$ disjoint copies of $K_{s-x_1}$. Observe that $s-x_1 < t$ and 
		let us denote the corresponding copies of $K_{s-x_1}$ by $A_1,\ldots,A_k$. We claim that in this case we have $$x_1+k(s-x_1) < kt.$$ Otherwise we could complete $A_1, \ldots , A_k$ from $X_1$ into $k$ disjoint copies of $K_t$ as every vertex of $G_2$ is connected to every vertex of $X_1$ and that would be a contradiction. This inequality implies $s-x_1 \le x$, and as obviously there are $O(n^{s-x_1})$ copies of $K_s$ in $\cA(X_1)$ we stop the application of canonical partitions here, and we are done.
		
		\smallskip
		
		\textbf{Case 2}: $G_2$ does not contain $k$ disjoint copies of $K_{s-x_1}$. In this case we jump to Step 2.
		
		\bigskip
		\noindent
		Now we describe the $i$th step for $i \ge 2$:
		
		\bigskip
		\noindent
		\textbf{Step i:} We have from the $(i-1)$th step: 
		
		\medskip
		
		1) a sequence of subsets $X_1,L(G_1),\ldots,X_{i-1},L(G_{i-1})$ of the vertex set of our initial graph $G_1$, where:
		
		\smallskip
		
		1.1) $X_j \subset L(G_j)$ for all $j \le i-1$, and
		
		\smallskip
		
		1.2) $L(G_j)$ ($j \le i-1$) are pairwise disjoint.
		
		\medskip
		
		2) A set of copies of $K_s$ in $G_1$ parametrized by $X_1,\ldots,X_{i-1}$:
        $$\cA(X_1,\ldots,X_{i-1}):=$$ $$\{A : A \textrm{ is a copy of }K_s \textrm{ in }  G_1 \textrm{ with } |V(A) \cap L(G_1)|=X_1,\ldots,|V(A) \cap L(G_{i-1})|=X_{i-1} \}.$$
		
		3) $G_i$, that is the subgraph of $G_{i-1}$ spanned by the (union of the) edges of the copies in $\cA(X_1,\ldots,X_{i-1})$ on the vertex set $R(G_{i-1})$, and $G_i$ is a $kK_{s-x_1-\ldots-x_{i-1}}$-free graph.
		
		\
		
		We do the following in Step $i$:
		
		\medskip
		\begin{itemize}
			
			\item[(i.1)] We consider the canonical $(k,K_{s-x_1-\ldots-x_{i-1}})$-partition of $G_i$.
			
			\smallskip
			
			\item[(i.2)] We fix an arbitrary nonempty $X_i \subset L(G_i)$ (of size $x_i$) and let $$\cA(X_1,\ldots,X_i):=$$ $$\{A : A \textrm{ is a copy of }K_s \textrm{ in } G_1 \textrm{ with } |V(A) \cap L(G_1)|=X_1,\ldots,|V(A) \cap L(G_i)|=X_i \}.$$ Note that $V(A) \cap R(G_i)$ spans a (copy of) $K_{s-x_1-\ldots-x_i}$ for all $A \in \cA(X_1,\ldots,X_i)$ and let $G_{i+1}$ be the subgraph of $G_i$ spanned by the union of the edges of the elements of $\{V(A) \cap R(G_i) : A \in \cA(X_1,\ldots,X_i) \}$. So $G_{i+1}$ is a graph on the vertex set $R(G_i)$. We consider two cases:
			
		\end{itemize}
		
		\medskip
		
		\textbf{Case 1:} $G_{i+1}$ contains $k$ disjoint copies of $K_{s-x_1-\ldots-x_i}$. Let us denote the corresponding copies of $K_{s-x_1-\ldots-x_i}$ by $A_1,\ldots,A_k$. We claim that in this case we have $$x_1+\ldots+x_i+k(s-x_1-\ldots-x_i) < kt.$$ Otherwise we could complete the sets $V(A_1) \cap R(G_1), \ldots , V(A_k) \cap R(G_2)$ from $X_1\cup\ldots\cup X_i$ into $k$ disjoint copies of $K_t$ as all the vertices of $G_{i+1}$ are connected to all vertices in $X_1\cup\ldots\cup X_i$ and that would be a contradiction. This inequality implies $s-x_1-\ldots-x_i \le x$, and as obviously there are $O(n^{s-x_1-\ldots-x_i})$ copies of $K_s$ in $\cA(X_1,\ldots,X_i)$ we stop the application of canonical partitions here.
		
		\smallskip
		
		\textbf{Case 2}: $G_{i+1}$ does not contain $k$ disjoint copies of $K_{s-x_1-\ldots-x_i}$. In this case we jump to Step ($i+1$).
		
		\
		
		Note that our algorithm finishes in at most $kt$ steps as we always choose nonempty subsets. There are at most $O(1)$ ways to pick $X_1,\ldots,X_s$ and every copy of a $K_s$ will be an element of some $\cA(X_1,\ldots,X_j)$ for some $j \le s$ (and we stop the algorithm in Step $j$), so we are done with the proof.
		
	\end{proof}
	
	\begin{thm} If $t >s$, then we have $$ex(n, K_s, kK_t)=(1+o(1))\binom{t-1}{s}\left ( \frac{n}{t-1} \right )^{s}.$$ 
		
	\end{thm}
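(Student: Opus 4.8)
The plan is to derive both bounds directly from Erd\H os's result on $ex(n,K_s,K_t)$ (Proposition~\ref{Erdosprop}); the conceptual point is that when $t>s$ the copies of $K_s$ lying entirely inside the ``$K_t$-free part'' of a $kK_t$-free graph already dominate, so the universal-vertex contributions that drive the exponent in Theorem~\ref{completegen} become negligible here.

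For the lower bound, I would simply observe that the Tur\'an graph $T_{t-1}(n)$ is $K_t$-free, hence a fortiori $kK_t$-free, so $ex(n,K_s,kK_t)\ge ex(n,K_s,K_t)$, and by Proposition~\ref{Erdosprop} the right-hand side is $(1+o(1))\binom{t-1}{s}\left(\frac{n}{t-1}\right)^s$. No shifting of $n$ is needed since $T_{t-1}(n)$ itself is the construction.

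For the upper bound, take an arbitrary $kK_t$-free graph $G$ on $n$ vertices and fix a canonical $(k,K_t)$-partition, so that $G_R$ is $K_t$-free and $|L(G)|\le (k-1)t=O(1)$. Classify each copy of $K_s$ in $G$ by the number $j$ of its vertices lying in $L(G)$. For $j=0$ the copy lies inside the $K_t$-free graph $G_R$, so these contribute at most $ex(n,K_s,K_t)$. For each $j\ge 1$ the $j$ vertices in $L(G)$ can be chosen in at most $\binom{(k-1)t}{j}=O(1)$ ways, and the remaining $s-j$ vertices span some $K_{s-j}$, of which there are at most $\binom{n}{s-j}=O(n^{s-1})$ in total. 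Summing over $j$ gives $N(K_s,G)\le ex(n,K_s,K_t)+O(n^{s-1})=(1+o(1))\binom{t-1}{s}\left(\frac{n}{t-1}\right)^s$, again by Proposition~\ref{Erdosprop}, which matches the lower bound.

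There is essentially no serious obstacle: this statement is the ``easy half'' complementing Theorem~\ref{completegen}, and the only thing to verify is that the error terms are genuinely of lower order. This uses exactly the hypothesis $s<t$, which guarantees $ex(n,K_s,K_t)=\Theta(n^s)$, so the $O(n^{s-1})$ coming from cliques meeting $L(G)$ is negligible against the main term. (When $s\ge t$ this fails, and that is precisely why Theorem~\ref{completegen} has a different, smaller exponent.)
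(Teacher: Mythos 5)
Your proposal is correct and follows essentially the same route as the paper: the Tur\'an graph $T_{t-1}(n)$ for the lower bound, and for the upper bound the canonical partition, splitting copies of $K_s$ according to whether they meet $L(G)$ (contributing $O(n^{s-1})$) or lie entirely in the $K_t$-free graph $G_R$ (bounded via Proposition~\ref{Erdosprop}). The only cosmetic difference is that you classify by the exact number $j$ of vertices in $L(G)$, while the paper lumps all $j\ge 1$ together; the estimates are identical.
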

	
	\begin{proof} To prove the lower bound one just considers the Tur\'an graph $T_{t-1}(n)$.
		
		For the upper bound consider a $kK_t$-free graph $G$ and its canonical partition. First let us count the copies of $K_s$ that have a common vertex with $L(G)$. There are $O(1)$ ways to pick the vertices from $L(G)$ and $O(n^{s-1})$ ways to pick the remaining vertices from $R(G)$. 
		
		Now let us count those copies that are in $G_R$. As $G_R$ is a $K_t$-free graph on at most $n$ vertices, by Proposition \ref{Erdosprop} there are at most $$(1+o(1))\binom{t-1}{s}\left ( \frac{n}{t-1} \right )^{s}$$ copies of $K_s$ in it. Adding these bounds up, the proof is complete.
	\end{proof}
	
	\begin{thm}\label{compli} If $s \ge t \ge s-k+2$ then we have $$ex(n, K_s, kK_t)=(1+o(1))\binom{k-1}{ s-t+1}\left ( \frac{n}{t-1} \right )^{t-1}.$$
	\end{thm}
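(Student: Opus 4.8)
The plan is to prove the lower bound by an explicit construction and the upper bound through a canonical partition together with one structural lemma about vertices lying in many copies of $K_t$.

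For the lower bound I would take $G=K_{k-1}+T_{t-1}(n-k+1)$. This graph is $kK_t$-free: since $T_{t-1}(n-k+1)$ has clique number $t-1$, every copy of $K_t$ must contain a vertex of $K_{k-1}$, and there are only $k-1$ of those. To count copies of $K_s$, one chooses $a$ vertices of $K_{k-1}$ and completes them by a copy of $K_{s-a}$ in $T_{t-1}(n-k+1)$; a clique of the Tur\'an graph meets each part in at most one vertex, so $a\ge s-t+1$, and $s-t+1\le k-1$ by the hypothesis $t\ge s-k+2$. The term $a=s-t+1$ contributes $(1+o(1))\binom{k-1}{s-t+1}\bigl(\tfrac{n}{t-1}\bigr)^{t-1}$ copies (choose the $s-t+1$ universal vertices, then one vertex from each of the $t-1$ parts), while every term with $a>s-t+1$ is $O(n^{t-2})$; this yields the stated lower bound.

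For the upper bound, recall that Theorem~\ref{completegen} already gives order of magnitude $n^{t-1}$ in this range (one checks that $\lceil(kt-s)/(k-1)\rceil-1=t-1$), so the task is to pin down the constant. Let $G$ be a $kK_t$-free graph on $n$ vertices, take a canonical $(k,K_t)$-partition with classes $L=L(G)$ and $R=R(G)$, so that $\abs{L}\le(k-1)t$ and $G_R$ is $K_t$-free. Classify each copy $K$ of $K_s$ by $r:=\abs{V(K)\cap R}$; since $V(K)\cap R$ spans a clique of the $K_t$-free graph $G_R$, we have $r\le t-1$. The copies with $r\le t-2$ number $O(n^{t-2})=o(n^{t-1})$: there are $O(1)$ ways to choose the $s-r$ vertices of $K$ lying in $L$, and by Proposition~\ref{Erdosprop} at most $O(n^{t-2})$ copies of $K_r$ in $G_R$. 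So it remains to count copies with exactly $t-1$ vertices in $R$. For an $(s-t+1)$-element clique $Q\subseteq L$, let $R_Q$ be the set of vertices of $R$ adjacent to every vertex of $Q$; then these copies correspond bijectively to the pairs $(Q,P)$ with $P$ a copy of $K_{t-1}$ in $G[R_Q]$, so their number is $\sum_Q N(K_{t-1},G[R_Q])$, the sum ranging over all such $Q$.

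The main step --- and the one I expect to need the most care --- is bounding this sum. Fix $\epsilon=\epsilon(n)\to0$ with $\epsilon(n)n\to\infty$ (for instance $\epsilon(n)=1/\log n$; one could instead use a small fixed constant and let it tend to $0$ afterwards) and call $Q$ \emph{heavy} if $N(K_{t-1},G[R_Q])>\epsilon n^{t-1}$. Since $G[R_Q]$ is $K_t$-free, Proposition~\ref{Erdosprop} bounds the contribution of each heavy $Q$ by $(1+o(1))\bigl(\tfrac{n}{t-1}\bigr)^{t-1}$, and as there are only $O(1)$ cliques $Q$ in all, the non-heavy ones together contribute $O(\epsilon n^{t-1})=o(n^{t-1})$. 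Thus it suffices to show that there are at most $\binom{k-1}{s-t+1}$ heavy cliques. Let $L^{*}$ be the set of vertices of $G$ lying in more than $\epsilon n^{t-1}$ copies of $K_t$. Every vertex $v$ of a heavy clique $Q$ lies in $L^{*}$: as $v$ is adjacent to every vertex of $R_Q$, each of the more than $\epsilon n^{t-1}$ copies of $K_{t-1}$ in $G[R_Q]$ extends by $v$ to a copy of $K_t$ through $v$. Hence every heavy clique is an $(s-t+1)$-subset of $L^{*}\cap L$, and it remains to prove $\abs{L^{*}}\le k-1$. If $v_1,\dots,v_k\in L^{*}$ were distinct, I would build disjoint copies $T_1,\dots,T_k$ of $K_t$ with $v_i\in T_i$ greedily: given $T_1,\dots,T_{i-1}$, the copies of $K_t$ through $v_i$ that avoid the at most $(k-1)t+k$ vertices in $V(T_1)\cup\dots\cup V(T_{i-1})\cup\{v_{i+1},\dots,v_k\}$ number at least $\epsilon n^{t-1}-O(n^{t-2})>0$ once $n$ is large (this is where $\epsilon(n)n\to\infty$ is used), so $T_i$ exists; the resulting $T_1,\dots,T_k$ form a copy of $kK_t$, a contradiction. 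Therefore $\abs{L^{*}}\le k-1$, there are at most $\binom{\abs{L^{*}\cap L}}{s-t+1}\le\binom{k-1}{s-t+1}$ heavy cliques, and summing all the estimates gives $N(K_s,G)\le(1+o(1))\binom{k-1}{s-t+1}\bigl(\tfrac{n}{t-1}\bigr)^{t-1}$, matching the lower bound.
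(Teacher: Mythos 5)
Your proof is correct, and although it shares the paper's overall skeleton (the same lower-bound construction $K_{k-1}+T_{t-1}(n-k+1)$, the canonical $(k,K_t)$-partition, the reduction to copies of $K_s$ meeting $R(G)$ in exactly $t-1$ vertices, and Erd\H{o}s's count of $K_{t-1}$'s in a $K_t$-free graph via Proposition \ref{Erdosprop}), the crucial step is handled by a genuinely different argument. The paper bounds the relevant $(s-t+1)$-sets in $L(G)$ by an exchange argument inside the canonical partition: for each of the disjoint copies $B_1,\dots,B_r$ of $K_t$ forming $G_L$ it fixes a representative copy $A_i\in\cA$ meeting $B_i$ and shows that, apart from $O(n^{t-2})$ exceptional copies, every $A\in\cA$ can meet $B_i$ only in the single vertex of $B_i\cap A_i$ (otherwise $B_i$ could be swapped for two disjoint copies of $K_t$, contradicting maximality of the canonical family); hence the $L$-parts of almost all copies lie in a set of at most $k-1$ vertices. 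You instead introduce a threshold $\epsilon(n)\to 0$ with $\epsilon(n)n\to\infty$, define $L^{*}$ as the set of vertices lying in more than $\epsilon n^{t-1}$ copies of $K_t$, prove $\abs{L^{*}}\le k-1$ by greedily embedding $k$ disjoint copies of $K_t$ (using $kK_t$-freeness directly rather than maximality of the canonical family), and note that every heavy clique sits inside $L^{*}$ while the $O(1)$ non-heavy cliques contribute only $O(\epsilon n^{t-1})=o(n^{t-1})$. Both routes are sound; the paper's is a purely combinatorial replacement argument with no asymptotic threshold and stays entirely within its canonical-partition machinery, while your ``few vertices lie in many copies'' lemma is a more standard, robust supersaturation-type step that avoids the exceptional-set bookkeeping and yields the extra structural information that, up to $o(n^{t-1})$ copies, every $K_s$ uses only $R(G)$ together with a fixed set of at most $k-1$ high-multiplicity vertices --- which could be a useful starting point for sharper (e.g.\ exact) versions of Theorem \ref{compli}.
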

	
	\begin{proof}
		For the lower bound consider the graph $K_{k-1}+T_{t-1}(n-k+1)$. Counting the number of $K_s$'s that contain exactly $s-t+1$ vertices from $K_{k-1}$ gives the desired lower bound.
		
		%Lower bound construction: same as above. the number of $K_s$'s is $$\left ( \frac{n}{t-1} \right )^{t-1} {k-1 \choose s-t+1}.$$
		
		For the upper bound consider a $kK_t$-free graph $G$ and its canonical partition. Then any copy of $K_s$ contains at most $t-1$ vertices from $R(G)$. 
		The number of those copies that contain exactly $i$ vertices from $R(G)$ is at most $\binom{t(k-1)}{s-i}\binom{n}{i}=O(n^i)$, thus it is enough to only take care of those copies that contain exactly $t-1$ vertices from $R(G)$. 
		
		Let $$\cA:=\{A : A \textrm{ is a copy of }K_s \textrm{ in } G \textrm{ with } |V(A) \cap R(G)|=t-1 \}.$$ Then we want to upper bound the cardinality of $\cA$. Note that any element of $\cA$ intersects $R(G)$ in $t-1$ vertices spanning a complete graph in $G$. Let $B_1,\dots, B_r$ be the copies of $K_t$ defining $G_L$ with for some $r<k$.
		
		Fix an arbitrary $A_1 \in \cA$ that intersects $B_1$. One can easily see that the cardinality of $$\cA'_1:=\{A \in \cA : V(A)\cap V(B_1) \cap R(G) \neq \emptyset\}$$ is $O(n^{t-2})$. Consider any $A'_1 \in \cA \setminus \cA'_1$ such that $A_{1,R}=V(A)\cap R(G)$ and $A'_{1,R}=V(A'_1)\cap R(G)$ are disjoint and $A'_1 \cap B_1 \neq \emptyset$. The existence of such $A'_1$ implies that $A_1 \cap B_1$ and $A'_1 \cap B_1$ are the same one element since otherwise $B_1$ could be replaced by two disjoint copies of $K_t$ in $G$ (namely those spanned by $V(A_R)\cup \{x\}$ and $V(A'_R)\cup \{y\}$ for some $x \neq y \in B_1$), contradicting the definition of canonical partition. 
		
		In the same way for every $1 < i \le r$ we can fix  $A_i \in \cA$, that intersects $B_i$, and (except a set $\cA'_i \subset \cA$, whose cardinality is $O(n^{t-2})$) we get that for every $A \in \cA \setminus \cA'_i$ we have $$A \cap B_i \subset B_i \cap A_i$$ where $A_i\cap B_i$ contains either $0$ or $1$ vertex.
		Altogether we have that for every $A \in \cA \setminus \cup_{i=1}^{r} \cA'_i$ $$V(A) \cap L(G) \subset \cup_{i=1}^{r} (V(B_i) \cap V(A_i)).$$
		
		By Proposition \ref{Erdosprop} there are at most $(1+o(1))\left ( \frac{n}{t-1} \right )^{t-1}$ copies of a $K_{t-1}$ in a $K_t$-free graph. The statement easily follows.
		
		%There is a $K_s$ $A$ that meets a left hand side $K_t$ in at least $2$ vertices then all of those  $K_s$'s that meet that specific $K_t$ meet the right hand side of $A$. Thus those  $K_s$'s that meet that specific $K_t$ is small. So the remaining $K_s$'s meet a remaining $K_t$ in at most 1 point. de ugyanigy az sem lehet, hogy 2 kulonbozo Ks, aminek ugyanaz a jobb oldala egy Kt-t ket kulonbozo pontban metsz. 
		
		%\textbf{Case 2.1.2:} So we have that if we consider a K(t-1) on the right hand side and consider those 
		%$K_s$'s that contain it than we have the mathcing upper bound.
	\end{proof}
	
	\section{Forbidding cycles}
	
	The systematic study of counting substructures in $2k$-cycle-free graphs was initiated independently in  \cite{GGMV2017+,GP2017+} and \cite{GS2017+}. 
	
	% We will use the following result of Gy\H{o}ri and Lemons \cite{Gy_Lemons}: Any $r$-uniform hypergraph avoiding a Berge-$C_{2k+1}$ contains at most $O(n^{1+1/k})$ hyperedges. This result implies the following.
	
	% \begin{thm}[Gy\H{o}ri, Lemons]
	% \label{GyLemonsthm}
	% For any $r \ge 3, l  \ge 2$, we have
	% $ex(n, K_r, C_{2l+1}) = O(n^{1+1/l})$.
	% \end{thm}
	
	% Indeed, if we take a $C_{2l+1}$-free graph and replace each $K_r$ in it with a hyperedge on the same vertex set as the clique, then the resulting $r$-uniform hyperedge does not contain a Berge-$C_{2l+1}$.
	
	% Gy\H{o}ri and Lemons \cite{Gy_Lemons} also showed that any $r$-uniform hypergraph avoiding a Berge-$C_{2k}$ contains at most $O(n^{1+1/k})$ hyperedges, which implies the following.
	
	% \begin{thm}[Gy\H{o}ri, Lemons]
	% \label{GyLemonsthmeven}
	% For any $r \ge 2, l  \ge 2$, we have
	% $ex(n, K_r, C_{2l}) = O(n^{1+1/l})$.
	% \end{thm}
	
	%What about even cycles?
	\subsection{Counting complete graphs}
	
	For odd cycles, we have the following interesting phenomenon, depending on whether the size of the clique is bigger than $k$ or not.
	
	\begin{thm} 
		%r \ge 3? Range of r and k.
		\begin{enumerate}
			\item[(a)] If $r \le k$, then $ex(n,K_r,kC_{2l+1}) = \Theta(n^2)$.
			
			%What's up with r = k+1?
			\item[(b)] If $r > k+1$, then $ex(n,K_r,kC_{2l+1})=O(n^{1+1/l})$.
		\end{enumerate}
	\end{thm}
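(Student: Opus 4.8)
The plan is to prove both parts via the canonical $(k,C_{2l+1})$-partition; the new ingredient needed for (b) is a small lemma about common neighbourhoods of large cliques.

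\emph{Part (a).} For the lower bound I would take $G:=K_{k-1}+T_2(n-k+1)$. Since the bipartite graph $T_2(n-k+1)$ is $C_{2l+1}$-free, every copy of $C_{2l+1}$ in $G$ must use one of the $k-1$ universal vertices, so $G$ is $kC_{2l+1}$-free; and selecting any $r-2$ of the universal vertices (possible since $r\le k$) together with any edge of $T_2(n-k+1)$ produces a $K_r$, giving $\Omega(n^2)$ copies. For the matching upper bound, take any $kC_{2l+1}$-free graph $G$ with its canonical $(k,C_{2l+1})$-partition $V(G)=L(G)\cup R(G)$, so $|L(G)|\le (k-1)(2l+1)=O(1)$ and $G_R$ is $C_{2l+1}$-free. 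I classify the copies of $K_r$ by $b:=|V(K_r)\cap R(G)|$: the part $A:=V(K_r)\cap L(G)$ can be chosen in $2^{|L(G)|}=O(1)$ ways, and the $b$ vertices in $R(G)$ span a $K_b$ in $G_R$. If $b\ge 3$, this contributes $O(ex(n,K_b,C_{2l+1}))=O(n^{1+1/l})$ by Theorem~\ref{GyLemonsthm}(a); if $b=2$ it is at most $O(1)$ times $|E(G_R)|=O(n^2)$; if $b\le 1$ it is $O(n)$. Adding up, $ex(n,K_r,kC_{2l+1})=O(n^2)$. (The upper bound argument does not use $r\le k$.)

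\emph{Part (b).} The key step is the lemma: \emph{if $A$ is a clique with $|A|\ge k$ in a $kC_{2l+1}$-free graph on $n$ vertices, then the subgraph induced by the common neighbourhood $N(A)$ has $O(n)$ edges.} To prove it, suppose $G[N(A)]$ had more than $C(k,l)\cdot n$ edges for a suitable constant. By the Erd\H{o}s--Gallai theorem, a graph on at most $n$ vertices with more than $(l-1)n$ edges contains a path on $2l$ vertices, and deleting $2l$ vertices destroys at most $2ln$ edges; hence for $C(k,l)$ large enough one can greedily extract $k$ vertex-disjoint paths $P^{(1)},\dots,P^{(k)}$, each on $2l$ vertices, inside $G[N(A)]$. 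Choosing distinct $a_1,\dots,a_k\in A$ and closing each $P^{(i)}$ into a $(2l+1)$-cycle through $a_i$ — legal since $a_i$ is adjacent to every vertex of $N(A)\supseteq V(P^{(i)})$ — yields $k$ vertex-disjoint copies of $C_{2l+1}$, a contradiction. (One could instead invoke $ex(n,kP_{2l})=O(n)$, which follows from Theorem~\ref{Gorgol}.)

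With the lemma in hand, I would repeat the counting of part (a) for a $kC_{2l+1}$-free graph $G$ with $r\ge k+2$. When $b\ge 3$ (in particular $b=r$, i.e.\ copies lying entirely in $G_R$) we again get $O(ex(n,K_b,C_{2l+1}))=O(n^{1+1/l})$, and when $b\le 1$ we get $O(n)$. The remaining, and only delicate, case is $b=2$: now $|A|=|V(K_r)\cap L(G)|=r-2\ge k$, so $A$ is a clique of size $\ge k$, and the two remaining vertices of the $K_r$ form an edge inside $N(A)\cap R(G)$; by the lemma there are only $O(n)$ such edges. Summing over the $O(1)$ choices of $A$ and the values of $b$ gives $ex(n,K_r,kC_{2l+1})=O(n^{1+1/l})$. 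The one genuinely new point — and the main obstacle — is the lemma: realizing that a large clique completely joined to a part carrying many edges caps off long paths into many vertex-disjoint odd cycles; everything else is routine partition-and-count against Theorem~\ref{GyLemonsthm}.
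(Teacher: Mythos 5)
Your proof is correct, and while part (a) and the overall skeleton of part (b) coincide with the paper's argument (canonical $(k,C_{2l+1})$-partition, Theorem~\ref{GyLemonsthm} for the copies of $K_r$ meeting $R(G)$ in at least $3$ vertices, and the observation that in the remaining case the $r-2\ge k$ vertices in $L(G)$ are common neighbours of the edge in $R(G)$), your treatment of the two-vertex case is genuinely different. The paper collects all edges $xy$ of $G_R$ that extend to a $K_r$ through $L(G)$ into a graph $G'_R$, shows $G'_R$ is $kC_{2l}$-free by rerouting one edge of each of $k$ disjoint even cycles through distinct common neighbours in $L(G)$, and then bounds $|E(G'_R)|\le ex(n,kC_{2l})=O(n^{1+1/l})$ via Theorem~\ref{Gorgol} and Bondy--Simonovits. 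You instead fix the set $A\subseteq L(G)$ of size $r-2\ge k$ and prove the lemma that the common neighbourhood of any $k$ vertices spans only $O(n)$ edges, by extracting $k$ disjoint paths $P_{2l}$ (Erd\H{o}s--Gallai, or $ex(n,kP_{2l})=O(n)$ from Theorem~\ref{Gorgol}) and closing each through a distinct vertex of $A$; the resulting cycles are disjoint since $A$ is disjoint from its common neighbourhood. This buys a stronger bound of $O(n)$ for the two-vertex case (the paper only gets $O(n^{1+1/l})$ there, which is all that is needed since the $m\ge 3$ term dominates anyway) and avoids invoking the Bondy--Simonovits theorem, at the modest cost of introducing the auxiliary lemma; for part (a) your explicit construction $K_{k-1}+T_2(n-k+1)$ and the partition count are the same argument the paper packages through Theorem~\ref{compl1}.
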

	
	\begin{proof} First, let us prove $(a)$. The lower bound follows from Theorem \ref{compl1}, as $$\max_{m\le r}(ex(n,K_m,C_{2l+1}))\ge ex(n,K_2,C_{2l+1})=\Theta(n^2).$$ The quadratic upper bound similarly follows from Theorem \ref{compl1}, using that for any $r \ge 2$, $ex(n, K_r, C_{2l+1})=O(n^2)$ (we used Theorem \ref{GyLemonsthm} for $r \ge 3$).
		
		Now we prove $(b)$. Consider a $kC_{2l+1}$-free graph $G$, and its canonical partition. Then every copy of $K_r$ consists of $m$ vertices in $R(G)$ and $r-m$ vertices in $L(G)$ for some $m$. If $m>2$, then by Theorem \ref{GyLemonsthm}, there are $O(n^{1+1/l})$ copies of a $K_m$ in $G_R$, as it is $C_{2l+1}$-free. Moreover, there are $O(1)$ ways to choose the $r-m$ vertices in $L(G)$, so there are at most $O(n^{1+1/l})$ copies of $K_r$ in $G$ such that $m > 2$. Note that the case $m\le 1$ only gives linearly many copies of $K_r$. Thus we only have to deal with the case $m = 2$. In other words, it only remains to show that the number of copies of $K_r$ in $G$ which contain exactly two vertices from $R(G)$ is $O(n^{1+1/l})$.
		
		Let $G'_R$ be the subgraph of $G_R$ consisting of only those edges $xy \in E(G_R)$ such that $x,y$ and some $r-2$ vertices from $L(G)$ form a copy of $K_r$ in $G$. Clearly, the number of copies of $K_r$ in $G$ which contain exactly two vertices from $R(G)$ is  $O(E(G'_R))$ because each edge of $G'_R$ can be extended to such a copy of $K_r$ in at most $\binom{\abs{L}}{r-2} = O(1)$ ways. 
		%Let us denote by $R'$ the graph we get from $R$ by deleting the edges that do not appear in a copy of $K_r$ in $G$ that contains $r-2$ vertices in $L$. Then we have $N(K_r,G)=O(n^{1+1/l})+O(E(R'))$.
		Since an edge $xy$ of $G'_R$ appears in a copy of $K_r$ that contains $r-2$ vertices in $L(G)$, $x$ and $y$ have at least $r-2 \ge k$ common neighbors in $L(G)$. If  $G'_R$ contains $k$ vertex-disjoint copies of $C_{2l}$, then we pick an arbitrary edge from each of these $k$ copies. For these $k$ edges $e_1,\dots,e_k$ we can greedily pick $k$  vertices $v_1,\dots, v_k$ in $L(G)$ such that $v_i$ is adjacent to both endpoints of $e_i = x_iy_i$ for every $i$. Then we replace $e_i$ with $v_ix_i$ and $v_iy_i$, and this way we get $k$ vertex disjoint copies of $C_{2l+1}$ in $G$, a contradiction. Thus $G'_R$ does not contain $k$ vertex-disjoint copies of $C_{2l}$, hence $$|E(G'_R)|\le ex(n,kC_{2l})=ex(n,C_{2l})+O(n),$$ by Theorem \ref{Gorgol}. A well-known theorem of Bondy and Simonovits \cite{BS1974} states that $ex(n,C_{2l}) = O(n^{1+1/l})$, so $$|E(G'_R)|\le ex(n,kC_{2l}) = O(n^{1+1/l}).$$ Therefore, the number copies of $K_r$ in $G$ which contain exactly two vertices from $R(G)$ is also $O(n^{1+1/l})$, as desired.
	\end{proof}
	
	For even cycles, we have the following.
	\begin{prop}
		For any $r \ge 2$, $l \ge 2$, we have
		$ex(n,K_r,kC_{2l})=O(n^{1+1/l})$.
	\end{prop}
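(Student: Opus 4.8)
The plan is to run the same canonical-partition argument that was used for odd cycles, but the even case turns out to be cleaner: the relevant base estimate already has the right order of magnitude, since $ex(n,K_2,C_{2l})=ex(n,C_{2l})=O(n^{1+1/l})$ by Bondy--Simonovits \cite{BS1974}, so no edge-to-path replacement trick is needed.

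First I would take a $kC_{2l}$-free graph $G$ on $n$ vertices and fix a canonical $(k,C_{2l})$-partition: the subgraph $G_L$ consisting of a maximum family of vertex-disjoint copies of $C_{2l}$ (necessarily at most $k-1$ of them, so $|L(G)|\le 2l(k-1)=O(1)$), and the induced subgraph $G_R$ on $R(G)=V(G)\setminus L(G)$, which is $C_{2l}$-free. Then I would classify each copy of $K_r$ in $G$ by the number $m$ of its vertices lying in $R(G)$. If $m\le 1$, there are at most $\binom{|L(G)|}{r-m}\cdot n=O(n)$ such copies, since the at most one vertex in $R(G)$ can be picked in $\le n$ ways and the remaining $r-m$ vertices come from the constant-size set $L(G)$. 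If $m\ge 2$, the $m$ vertices in $R(G)$ span a copy of $K_m$ in the $C_{2l}$-free graph $G_R$, so by Theorem~\ref{GyLemonsthm}(b) there are at most $ex(|R(G)|,K_m,C_{2l})=O(n^{1+1/l})$ of them, and each extends to a copy of $K_r$ in at most $\binom{|L(G)|}{r-m}=O(1)$ ways. Summing over the constantly many values $0\le m\le r$ yields $ex(n,K_r,kC_{2l})=O(n^{1+1/l})$.

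I do not expect a genuine obstacle here; the only inputs are the canonical partition and the even-cycle counting bound of Theorem~\ref{GyLemonsthm}(b) (which for $m=2$ is exactly Bondy--Simonovits). The only points requiring a bit of care are that $|R(G)|\le n$, so that Theorem~\ref{GyLemonsthm}(b) applies with $n$ as the vertex count, and that the number of classes $m$ to be considered is at most $r+1$, a constant, so the sum of the $O(n^{1+1/l})$ contributions is still $O(n^{1+1/l})$.
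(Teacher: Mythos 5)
Your proof is correct and is essentially the paper's argument: the paper simply cites its general Theorem~\ref{arbit} (whose proof is exactly this canonical-partition-plus-constant-extension argument) and notes that induced subgraphs of $K_r$ are complete graphs, then applies Theorem~\ref{GyLemonsthm}(b); you just inline that partition argument directly, handling $m\le 1$ by hand where the paper's invocation of the theorem for $K_i$, $i\ge 2$, plus trivial small cases does the same work.
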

	\begin{proof}
		Using Theorem \ref{arbit}, we get $ex(n,K_r,kC_{2l}) = O(\overline{ex}(n, K_r,C_{2l}))= O(\sum_{i=1}^r ex(n, K_i,C_{2l}))$, as any induced subgraph of $K_r$ is also a complete graph. By Theorem \ref{GyLemonsthm}, we have $ex(n, K_i,C_{2l}) = O(n^{1+1/l})$ for any $i \ge 1$, so $ex(n,K_r,kC_{2l})=O(n^{1+1/l})$, as required.
	\end{proof}
	
	%note that reduction lemma.
	% There is an easy way to prove $ex(n,K_r,F)=O(ex(n,F))$ for some graphs $F$. We take an $F$-free graph $G$ and delete the edges that appear in less than $c=c(F)$ copies of $K_r$, this way we delete $O(ex(n,F))$ copies of $K_r$. If the resulting graph $G'$ contains any edges, they are contained in at least $c$ copies of $K_r$. There are many graphs $F$ that are subgraphs of every graph where each edge is contained in at least $c(F)$ copies of $K_r$, for example cycles of any length. For those graphs, $G'$ must be empty. It is easy to see that if $F$ is such a graph, then $kF$ is too. This gives $ex(n,K_r,kC_{2l})=O(n^{1+1/l})$ for any $r$, $k$ and $l$.
	
	% Note that this method can be used to prove $ex(n,K_r,F)=O(ex(n,K_l,F))$ for the same graphs if $l<r$. As a first step, we delete the edges of $G$ that do not appear in any copies of $K_l$. In the resulting graph $G_0$ every edge is contained in a copy of $K_l$, thus $|E(G_0)|\le \binom{l}{2}N(K_l,G_0)=O(ex(n,K_l,F))$. After this we delete the edges that appear in less than $c$ copies of $K_r$, this way we delete $O(ex(n,K_l,F))$ copies of $K_r$, and arrive to the empty graph.

	\section{Forbidding bipartite graphs}
	
	Let $K_{a,b}$ denote a complete bipartite graph with color classes of sizes $a$ and $b$ with $a \le b$. Alon and Shikhelman proved the following.
	
	\begin{prop}[\cite{ALS2016}, Proposition 4.10] 
		\label{alon_s_Kab}
		If $s\le t$ and $a\le b <s$ then
		$ex(n,K_{a,b},K_{s,t})=O(n^{a+b-ab/s})$.
		
	\end{prop}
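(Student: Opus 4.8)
The plan is to count copies of $K_{a,b}$ through common neighbourhoods and then to bound the resulting sum of binomial coefficients by convexity, feeding in the $K_{s,t}$-freeness of $G$ only through one higher-moment estimate. Let $N(v)$ be the neighbourhood of a vertex $v$, and for a set $S$ of vertices write $N(S):=\bigcap_{v\in S}N(v)$ and $d(S):=|N(S)|$. Since no vertex lies in its own neighbourhood, $A\cap N(A)=\emptyset$ for every set $A$, so each copy of $K_{a,b}$ in $G$ arises (up to a bounded overcount when $a=b$) by choosing its $a$-side $A$ and then a $b$-element subset of $N(A)$. Hence
$$N(K_{a,b},G)\ \le\ \sum_{|A|=a}\binom{d(A)}{b}.$$
(The roles of $a$ and $b$ could be interchanged; I keep the normalisation $a\le b$.)

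First I would record two bounds on sums of binomials of the $d(A)$. The trivial one is $\sum_{|A|=a}\binom{d(A)}{0}=\binom{n}{a}=O(n^a)$. The essential one is
$$\sum_{|A|=a}\binom{d(A)}{s}\ =\ O(n^s),$$
which is the familiar K\H{o}v\'ari--S\'os--Tur\'an double count: the left-hand side counts the pairs $(A,T)$ with $|A|=a$, $|T|=s$ and all edges between $A$ and $T$ present, and (fixing $T$ first) this also equals $\sum_{|T|=s}\binom{d(T)}{a}$; since $G$ is $K_{s,t}$-free we have $d(T)\le t-1$ for every $s$-element set $T$ (otherwise $T$ together with $t$ of its common neighbours would span a $K_{s,t}$), so the quantity is at most $\binom{n}{s}\binom{t-1}{a}=O(n^s)$. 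Here the hypotheses are used exactly: $a<s$ makes $s$ a legitimate higher power than $a$, and $a\le t-1$ (which follows from $a<s\le t$) makes $\binom{t-1}{a}$ a positive constant.

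Since $b<s$, the exponent $b$ lies strictly between $0$ and $s$, and I would interpolate between the two displayed estimates. Split the $a$-sets into those with $d(A)\le 2s$ and those with $d(A)>2s$. The first group contributes at most $\binom{n}{a}\binom{2s}{b}=O(n^a)$, which is $O(n^{a+b-ab/s})$ because $a<s$ gives $b-ab/s\ge 0$. For the second group, $\binom{d(A)}{b}\le d(A)^b$, while $d(A)>2s$ forces $\binom{d(A)}{s}\ge c\,d(A)^s$ for a constant $c>0$ depending only on $s$; hence $\sum_{d(A)>2s}d(A)^s=O\big(\sum_{|A|=a}\binom{d(A)}{s}\big)=O(n^s)$, and the power-mean inequality (with exponents $b\le s$) gives
$$\sum_{d(A)>2s}\binom{d(A)}{b}\ \le\ \sum_{d(A)>2s}d(A)^b\ =\ O\!\Big({\binom{n}{a}}^{\,1-b/s}\,(n^s)^{b/s}\Big)\ =\ O(n^{a+b-ab/s}).$$
Adding the two contributions completes the proof; for $a=b=1$ this recovers the K\H{o}v\'ari--S\'os--Tur\'an bound $ex(n,K_2,K_{s,t})=O(n^{2-1/s})$.

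I expect the only real subtlety to be this last step: one must use the correct moment and extract exactly the exponent $a+b-ab/s$. Using only the plain K\H{o}v\'ari--S\'os--Tur\'an edge bound $|E(G)|=O(n^{2-1/s})$ is not enough here, and in fact would not even yield the right exponent; one genuinely needs the $s$-th moment estimate controlled by the forbidden $K_{s,t}$, together with a convexity step organised so as not to lose in the exponent. The handling of the small-$d(A)$ range and the choice of which side of $K_{a,b}$ to fix are routine once this is in place.
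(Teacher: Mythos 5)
The paper does not prove this statement at all: it is imported verbatim from Alon and Shikhelman \cite{ALS2016} (their Proposition 4.10) and used as a black box in the proofs of Propositions \ref{k_abst} and \ref{prop21}, so there is no in-paper proof to compare against. Your argument is correct and is essentially the standard one behind that result: bound the number of copies by $\sum_{|A|=a}\binom{d(A)}{b}$, use $K_{s,t}$-freeness only through the double count $\sum_{|A|=a}\binom{d(A)}{s}=\sum_{|T|=s}\binom{d(T)}{a}\le\binom{n}{s}\binom{t-1}{a}=O(n^s)$, and then interpolate between the zeroth and $s$-th moments; your split at $d(A)>2s$ plus H\"older is a clean way to do the interpolation, and the hypotheses $a\le b<s\le t$ enter exactly where you say they do. The only difference from the original source is presentational: Alon and Shikhelman organize the convexity step so as to extract an explicit constant factor, whereas your power-mean version gives only the $O$-bound, which is all this paper needs.
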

	
	We will prove that the same upper bound holds for $ex(n,K_{a,b},kK_{s,t})$. Note that they also gave a constant factor in their proof; our proof would give a worse constant factor for the case $k>1$. They also showed that in some range of $a$ and $b$ this order of magnitude is sharp, this immediately implies the same for the case $k>1$.
	
	\begin{prop}
		\label{k_abst}
		If $s\le t$ and $a\le b <s$ then
		$ex(n,K_{a,b},kK_{s,t})=O(n^{a+b-ab/s})$.
		
	\end{prop}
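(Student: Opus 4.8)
The plan is to run the canonical-partition argument used throughout this section. Let $G$ be a $kK_{s,t}$-free graph on $n$ vertices and fix a canonical $(k,K_{s,t})$-partition, with $L=L(G)$ and $R=R(G)$; then $|L|\le (k-1)(s+t)=O(1)$ and $G_R=G[R]$ is $K_{s,t}$-free. I would sort the copies of $K_{a,b}$ in $G$ according to the pair $(S,\sigma)$, where $S$ is the set of vertices of the copy that lie in $L$ and $\sigma$ records which vertices of $S$ lie on the $a$-side of the copy. There are only $O(1)$ such pairs. Once $(S,\sigma)$ is fixed, the $a+b-|S|$ vertices of the copy that lie in $R$ span a complete bipartite subgraph $K_{a',b'}$ of $G_R$ with $a'\le a$, $b'\le b$ and $a'+b'=a+b-|S|$, and the copy is determined by $(S,\sigma)$ together with this $K_{a',b'}$ (the requirement that $S$ be adjacent to the appropriate part of $K_{a',b'}$ only decreases the count). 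Hence $ex(n,K_{a,b},kK_{s,t})=O(1)\cdot\max\{N(K_{a',b'},G_R):a'\le a,\ b'\le b\}$, so it is enough to bound each $N(K_{a',b'},G_R)$ by $O(n^{a+b-ab/s})$; the subcase $S=\emptyset$, i.e.\ $(a',b')=(a,b)$, is immediate from Proposition~\ref{alon_s_Kab}.

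For the general bound I would split into two cases. If $a'\ge 1$ and $b'\ge 1$, then $a'\le a\le b<s$ and $b'\le b<s$, so both parts of $K_{a',b'}$ are smaller than $s$ and Proposition~\ref{alon_s_Kab} (applied after possibly interchanging the two parts, which is harmless, and using that $ex(m,\cdot,K_{s,t})$ is monotone in $m$) gives $N(K_{a',b'},G_R)=O(n^{a'+b'-a'b'/s})$. It then suffices to check that the exponent is monotone, i.e.\ $a'+b'-a'b'/s\le a+b-ab/s$: rewriting the difference as $(a-a')(1-b/s)+(b-b')(1-a'/s)$ makes it visibly nonnegative, since $b<s$ and $a'\le b<s$. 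If instead $a'=0$ (and symmetrically if $b'=0$), then $K_{a',b'}$ is a set of $b'$ independent vertices, so $N(K_{a',b'},G_R)=O(n^{b'})$, and $b'\le b\le a+b-ab/s$ because $ab/s\le a$. Summing the $O(1)$ contributions yields $ex(n,K_{a,b},kK_{s,t})=O(n^{a+b-ab/s})$.

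I expect the only delicate point to be the reduction in the first paragraph: one must verify that when a copy of $K_{a,b}$ meets $L$, the complete bipartite graph it leaves on $R$ genuinely has both parts of size less than $s$, so that Proposition~\ref{alon_s_Kab} applies — which is exactly where the hypothesis $a\le b<s$ enters, together with the elementary monotonicity of the exponent $a+b-ab/s$ in $a$ and $b$. Everything else is routine bookkeeping: the number of choices of $(S,\sigma)$ depends only on $k,s,t$, and extensions across the boundary between $L$ and $R$ can only reduce counts.
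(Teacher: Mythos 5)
Your proposal is correct and follows essentially the same route as the paper: canonical $(k,K_{s,t})$-partition, classify copies of $K_{a,b}$ by their $O(1)$ intersection with $L(G)$, bound the resulting $K_{a',b'}$-counts in the $K_{s,t}$-free graph $G_R$ via Proposition~\ref{alon_s_Kab}, and use monotonicity of the exponent $a'+b'-a'b'/s$ (which the paper checks incrementally and you check via the identity $(a-a')(1-b/s)+(b-b')(1-a'/s)\ge 0$). Your separate handling of $a'=0$ or $b'=0$ is a minor extra care the paper glosses over, but the argument is the same.
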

	
	\begin{proof} Let $G$ be a $kK_{s,t}$-free graph and consider its canonical partition. A copy of $K_{a,b}$ intersects $G_R$ in a copy of $K_{a',b'}$ for some $0\le a'\le a$ and $0\le b'\le b$. For fixed $a'$ and $b'$, there are $O(n^{a'+b'-a'b'/s})$ such copies by Proposition \ref{alon_s_Kab} since $G_R$ is $K_{s,t}$-free. Increasing $a'$ by $1$, increases  $a'+b'-a'b'/s$ by $1-b'/s$ which is positive since $b' < s$. Applying a similar argument for $a'$, we get that $a'+b'-a'b'/s$ is maximized when $a' = a$ and $b' = b$. Therefore, $O(n^{a'+b'-a'b'/s}) \le O(n^{a+b-ab/s})$.
    
		Moreover, the number of ways to extend a copy of $K_{a',b'}$ to a copy of $K_{a,b}$ by adding vertices from $L(G)$ is at most a constant (where this constant depends on $a,b,s,t$,but not on $n$). Now as $0\le a'\le a$ and $0\le b'\le b$, there are only at most $(a+1)(b+1)$ ways to pick $a'$ and $b'$, finishing the proof.
	\end{proof}
	
	Note that by Theorem \ref{arbit} and Remark \ref{alphaH} we have the following general lower bound: $ex(n,K_{a,b},kK_{s,t}) \ge \Omega(n^{\alpha(K_{a,b})})=\Omega(n^b)$ if $k \ge a+b$ and $a \le b$. If $b>s$, then $a+b-ab/s<b$, so the upper bound in the above proposition cannot hold in this case. Instead, we have the following.
	
	%k=1 version of this was studied?
	\begin{prop} 
    \label{prop21}
    If $a\le b$, $b\ge s$, $s\le t$, then $ex(n,K_{a,b},kK_{s,t})=O(n^b)$. Moreover, if $k>a$, then we have $ex(n,K_{a,b},kK_{s,t})=\Theta(n^b)$.
		
	\end{prop}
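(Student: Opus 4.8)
The plan is to handle the upper and lower bounds separately, both via the canonical partition. For the upper bound, let $G$ be a $kK_{s,t}$-free graph on $n$ vertices and take its canonical $(k,K_{s,t})$-partition, writing $L(G)$ for the vertices spanned by the maximum collection of disjoint copies of $K_{s,t}$ and $G_R$ for the graph induced on the remaining vertices $R(G)$. Since $|L(G)|\le k(s+t)=O(1)$, every copy of $K_{a,b}$ in $G$ is obtained from a copy of $K_{a',b'}$ in $G_R$ (for some $0\le a'\le a$, $0\le b'\le b$) by adding at most $a+b-a'-b'=O(1)$ vertices from $L(G)$, in at most a constant number of ways. So it suffices to bound $N(K_{a',b'},G_R)$ for each of the $O(1)$ choices of $(a',b')$. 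The trivial bound $N(K_{a',b'},G_R)=O(n^{a'+b'})=O(n^{a+b})$ is not good enough, so the point is that $G_R$ is $K_{s,t}$-free; I would argue that this forces $N(K_{a',b'},G_R)=O(n^b)$ whenever $a'\le a$, $b'\le b$. The cleanest way: if $a'<s$ then Proposition \ref{alon_s_Kab} gives $O(n^{a'+b'-a'b'/s})$, and since $b'\le b$ this is $O(n^b)$ as soon as $a'+b'-a'b'/s\le b$, i.e. $a'(1-b'/s)\le b-b'$; when $b'<s$ both sides are nonnegative and this reduces to $a'\le s$ (trivially true since $a'<s$), and when $b'\ge s$ the left side is $\le 0\le b-b'$ only if $b'\le b$, which holds — so in all these subcases we are fine. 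The remaining case is $a'\ge s$; but then a copy of $K_{a',b'}$ with $b'\ge t$ would itself contain $K_{s,t}$, so in $G_R$ we must have $b'\le t-1=O(1)$, giving $N(K_{a',b'},G_R)=O(n^{a'})$, and $a'\le a\le b$, so again $O(n^b)$. Summing the $O(1)$ contributions gives $ex(n,K_{a,b},kK_{s,t})=O(n^b)$.

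For the lower bound when $k>a$, the construction is $K_{a}+\bar K_{n-a}$, i.e.\ take $a$ universal vertices joined to an independent set of size $n-a$. This graph contains $\binom{a}{a}\binom{n-a}{b}=\Omega(n^b)$ copies of $K_{a,b}$: put the $a$-side on the universal vertices and choose the $b$-side arbitrarily among the independent vertices, using $b\ge s\ge a$ so that such a copy genuinely has its small side of size $a$ (if $a=b$ one takes any $a$ of the universal vertices as one side, still $\Omega(n^a)$ copies). On the other hand, any copy of $K_{s,t}$ in this graph has a side of size $s\ge a+1>a$ wait — more carefully: $K_{s,t}\subseteq K_{a}+\bar K_{n-a}$ requires one colour class to lie inside the independent set, so the other colour class, of size $\ge s$, must lie inside the $a$ universal vertices, forcing $s\le a$. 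Hmm, this needs $s>a$; since we assumed $a\le b$ but not $a<s$, I should instead use $k>a$ directly: take $K_{a}+\bar K_{n-a}$; every copy of $K_{s,t}$ uses at least one universal vertex (as $\bar K_{n-a}$ has no edges), and it uses at least $\min(s,t)=s$... if $s\le a$ this doesn't immediately block $k$ disjoint copies. The fix is to take $K_{s-1}$... no: the clean construction is $K_{(k-1)(s+t)}$-type is wrong too. Let me instead use: take $a$ universal vertices over an independent set — every $K_{s,t}$ contains a vertex of degree $\ge s+t-1$ only through the universal part, and since one side must be independent hence in $\bar K_{n-a}$, the other side of size $s$ sits in the $a$ universal vertices, so $s\le a$; when $s>a$ there are \emph{no} copies of $K_{s,t}$ at all and we are done trivially, and when $s\le a$ we instead note every copy of $K_{s,t}$ must meet the universal set, but a single universal vertex can't host a whole side — so I take the construction $K_{a}+\bar K_{n-a}$ only claims the bound when it is $K_{s,t}$-free, which holds iff $s>a$; for $s\le a$ we have $b\ge s$ so $b\le a$ forces $a=b=s$, and then $K_{a-1}+\bar K_{n-a+1}$ has no $K_{s,t}$ (one side size $s=a>a-1$) and still $\Omega(n^{a-1})$... that loses a power.

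The main obstacle, then, is exactly this lower-bound bookkeeping: getting a single $kK_{s,t}$-free construction with $\Omega(n^b)$ copies of $K_{a,b}$ that works uniformly in the regime $a\le b$, $b\ge s$, $s\le t$, $k>a$. The honest approach is to take $G_0=\bar K_{n-k+1}$ (no copy of $K_{s,t}$ at all since $s,t\ge 1$ and... $s\ge 1$ means $K_{s,1}$ needs an edge, fine, $\bar K$ is $K_{s,t}$-free for $t\ge 1$), and form $K_{k-1}+G_0$; this is $kK_{s,t}$-free by the standard argument (every $K_{s,t}$ meets the $k-1$ universal vertices). It contains $\Omega(n^b)$ copies of $K_{a,b}$ provided we can realise $K_{a,b}$ with its $b$-side inside $G_0$; this needs the $a$-side to have $a$ vertices available among the $k-1$ universal ones together with possibly... the $b$-side must be independent so it goes in $G_0$, and the $a$-side of size $a$ goes among the $k-1$ universal vertices — feasible iff $k-1\ge a$, i.e.\ $k>a$, which is exactly the hypothesis. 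Choosing the $b$-side gives $\binom{n-k+1}{b}=\Omega(n^b)$ copies, completing the lower bound. So the proof is: (i) canonical partition plus Proposition \ref{alon_s_Kab} and the $K_{s,t}$-free case analysis for the $O(n^b)$ upper bound; (ii) the join construction $K_{k-1}+\bar K_{n-k+1}$ with $k>a$ for the matching lower bound. The only delicate point to write carefully is the case split $a'\ge s$ versus $a'<s$ in part (i), ensuring the exponent never exceeds $b$.
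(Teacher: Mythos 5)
Your overall skeleton coincides with the paper's: canonical partition plus a case analysis over the intersection pattern $(a',b')$ for the upper bound, and the join of $k-1$ universal vertices with a $K_{s,t}$-free graph for the lower bound. Your lower bound is correct: $K_{k-1}+\bar K_{n-k+1}$ is $kK_{s,t}$-free (every copy of $K_{s,t}$ contains an edge, hence meets the $k-1$ universal vertices), and placing the $a$-side among the universal vertices uses exactly the hypothesis $k-1\ge a$, giving $\binom{k-1}{a}\binom{n-k+1}{b}=\Omega(n^b)$ copies; the paper does the same with an arbitrary $K_{s,t}$-free graph in place of $\bar K_{n-k+1}$.

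The gap is in the upper-bound case analysis. In the subcase $a'<s\le b'$ you invoke Proposition \ref{alon_s_Kab}, but that proposition requires \emph{both} class sizes to be smaller than $s$ (its hypothesis is $a\le b<s$), and the bound $O(n^{a'+b'-a'b'/s})$ you import is in fact false in that regime: for $s\ge 2$ the graph $K_{s-1,n-s+1}$ is $K_{s,t}$-free yet contains $\Theta(n^{b'})$ copies of $K_{1,b'}$, which exceeds $n^{1+b'-b'/s}$ whenever $b'>s$. Likewise, in the case $a'\ge s$ the inference ``$b'\le t-1$, hence $N(K_{a',b'},G_R)=O(n^{a'})$'' is a non sequitur as written: a constant-size second class only gives the trivial bound $O(n^{a'+b'})$, which can exceed $n^b$. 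Both defects are repaired by the single observation the paper uses and which you never state: in a $K_{s,t}$-free graph, any set of at least $s$ vertices has at most $t-1$ common neighbours. Hence whenever $\max(a',b')\ge s$, choose that larger side in $O(n^{\max(a',b')})=O(n^b)$ ways (using $a\le b$), after which the other side lies in a fixed set of size at most $t-1$ and can be chosen in $O(1)$ ways. With this substitution, and keeping your (correct) treatment of the subcase $a',b'<s$ via Proposition \ref{alon_s_Kab} together with $b\ge s$ to bound the exponent by $s\le b$, the argument goes through and is essentially the paper's proof.
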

	
	\begin{proof} Let $G$ be a $kK_{s,t}$-free graph and let us consider its canonical partition. A copy of $K_{a,b}$ intersects $G_R$ in a copy of $K_{a',b'}$ for some $0\le a'\le a$ and $0\le b'\le b$ with $a' \le b'$. Let us fix $a'$ and $b'$ and consider two cases. 
		
		If $b'<s$, then by Proposition \ref{alon_s_Kab} there are $O(n^{a'+b'-a'b'/s})$ copies of $K_{a',b'}$ in $G_R$ as it is $K_{s,t}$-free. By the same  argument as in the proof of Proposition \ref{k_abst}, it is easy to see that $a'+b'-a'b'/s$ increases, if we increase $a'$ as long as $b' < s$. So $a'+b'-a'b'/s < s+b'-sb'/s = s \le b$. Thus there are at most $O(n^b)$ copies of $K_{a',b'}$ in $G_R$.
		
		If $b'\ge s$, then we first claim that there are at most $O(n^b)$ copies of $K_{a',b'}$ in $G_R$. Indeed, there are at most $O(n^{b'})$ ways to pick $b'$ vertices, and they have at most $t-1$ common neighbors (otherwise we can find a $K_{s,t}$ in $G_R$). Thus there are at most $\binom{t-1}{a'}=O(1)$ ways to pick the other class of $K_{a',b'}$, so there are at most $O(n^{b'}) \le O(n^b)$ copies of $K_{a',b'}$ in $G_R$ again.
		
		Each copy of $K_{a',b'}$ in $G_R$ can be extended to a copy of $K_{a,b}$ in $G$ by adding vertices from $L(G)$ in $O(1)$ ways. Thus for any fixed $a'$ and $b'$, there are at most $O(n^b)$ copies of $K_{a,b}$ in $G$ and as there are only at most $(a+1)(b+1)$ choices for $a'$ and $b'$, the proof is complete.
		
		%old write-up with brackets is below. Why the brackets?? There was a simpler argument?
		
		% (((If $b'<s$, then))) there are $O(n^{a'+b'-a'b'/s})=O(n^b)$ such copies. 
		
		% (((If $b'\ge s$, then there are $O(n^{b'})$ ways to pick the $b'$ vertices, and they have at most $t-1$ common neighbors, thus there are at most $\binom{t-1}{a'}=O(1)$ ways to pick the other side of $K_{a',b'}$. Thus for any fixed $a'$ and $b'$ there are $O(n^b)$ ways to pick the intersection with $R$ and $O(1)$ ways to extend it to a copy of $K_{a,b}$, which finishes the proof.)))
		For the moreover part, take a $K_{s,t}$-free graph $G$ on $n-k+1$ vertices and consider $K_{k-1} + G$. It is clearly $k K_{s,t}$-free. Any set of $a$ vertices from $K_{k-1}$ together with any set of $b$ vertices from $G$ forms a copy of $K_{a,b}$, which finishes the proof.
	\end{proof}
	
	Let us focus now on the case $k=1$ and $b \ge s$, as this case was not examined in \cite{ALS2016}.
	% Why don't we consider our ex(n,K_{a,b},kK_{s,t}) version of this?
	% What about a < s < t < b?
	%connection of the following with the previous proposition (putting k =1)?
	%for example part b below is implied?
	\begin{prop}\label{bipartite} 
		\begin{enumerate}
			\item[(a)] If $s\le a \le b \le t$, then $ex(n,K_{a,b},K_{s,t})=O(n^s)$.
			
			\item[(b)] If $a < s \le b\le t$, then $ex(n,K_{a,b},K_{s,t})=\Theta(n^b)$.
		\end{enumerate}
	\end{prop}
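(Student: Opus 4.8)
For part (a), I would count copies of $K_{a,b}$ in a $K_{s,t}$-free graph $G$ on $n$ vertices by first choosing the image of the $a$-class. Since $a\ge s$, any $a$-set of vertices forming one side of a copy of $K_{a,b}$ must have at least $b\ge s$ common neighbours; but as $G$ is $K_{s,t}$-free, no $s$-subset of these $a$ vertices can have $t$ or more common neighbours, and since $a\ge s$ the whole $a$-set can have at most $t-1$ common neighbours. Thus once the $a$-class is fixed (in $O(n^a)$ ways — actually we can do better), the $b$-class lies inside a set of size at most $t-1$, giving $O(1)$ choices. This already yields $O(n^a)$, but to get $O(n^s)$ I would instead fix only an $s$-subset $S$ of the $a$-class: $S$ together with the $b$-class (of size $b\ge s$, hence $\ge s$) would give a $K_{s,s}$, and again $K_{s,t}$-freeness forces the common neighbourhood of $S$ to have size $\le t-1$; so after choosing $S$ in $O(n^s)$ ways, both the remaining $a-s$ vertices of the $a$-class and all $b$ vertices of the $b$-class lie in a fixed set of size $\le t-1$, contributing only $O(1)$. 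Hence $ex(n,K_{a,b},K_{s,t})=O(n^s)$.

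For part (b), the upper bound $O(n^b)$ is immediate (it even follows from Proposition \ref{prop21} with $k=1$, or just by choosing the $b$-class freely and noting its common neighbourhood has size $\le t-1$ since $b\ge s$, so the $a$-class has $O(1)$ choices). The content is the matching lower bound $\Omega(n^b)$. Here I would use that $a<s$, so a complete bipartite host graph with small left side is itself $K_{s,t}$-free: take $G = K_{a} + \overline{K_{n-a}}$, i.e.\ $a$ vertices each joined to all of the remaining $n-a$ independent vertices (and to each other). Since this graph has no independent set of size $a$ meeting... — more carefully, any copy of $K_{s,t}$ would need an $s$-element side, but every edge of $G$ is incident to one of the $a$ fixed vertices, and two vertices are adjacent only if at least one is among the $a$ special vertices; an $s$-set that is one side of a $K_{s,s}$ must be pairwise non-adjacent or... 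I would instead argue directly: in $G$ the only vertices of degree $\ge n-a \ge t$ worth using are the $a$ special ones, and since $a<s$ one cannot find $s$ vertices each adjacent to a common $t$-set unless all $s$ lie among... — the clean statement is that $K_{a,n-a}$ (the complete bipartite graph, no edges inside the $a$-side) is $K_{s,t}$-free because any $K_{s,t}$ subgraph has $s\le$ the size of the smaller side $\le \max(a, \text{anything in the large independent side})$, and the large side is independent so contributes at most one vertex to each side of a complete bipartite subgraph's... Let me just say: take $G=K_{a,n-a}$; it is $K_{s,t}$-free since its largest complete bipartite subgraph has smaller part of size $a<s$. Then count copies of $K_{a,b}$: put the $a$-side of the copy on the $a$-side of $G$ and the $b$-side on any $b$ of the $n-a$ vertices of the big side — this gives $\binom{n-a}{b}=\Omega(n^b)$ copies. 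Combined with the upper bound, $ex(n,K_{a,b},K_{s,t})=\Theta(n^b)$.

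The main obstacle I anticipate is getting the \emph{right} exponent in part (a): the naive "fix the $a$-class" bound gives only $O(n^a)$, and one must notice that fixing merely an $s$-element subset already pins down everything via $K_{s,t}$-freeness, which requires $a\ge s$ (used to ensure the $a$-side really does contain an $s$-subset whose common neighbourhood is constrained, and that the leftover $a-s$ vertices also land in that bounded common neighbourhood). A secondary point to get right in part (b) is verifying $K_{a,n-a}$ is genuinely $K_{s,t}$-free: this hinges precisely on $a<s$, so that no $s$ vertices on one side of a complete bipartite subgraph are available (the small side of $G$ has only $a$ vertices and the big side is an independent set, hence contributes at most one vertex to either colour class of a complete bipartite subgraph with both classes nonempty and the subgraph having an edge). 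I would state this as a short lemma-free observation rather than belabour it.
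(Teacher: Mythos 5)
Your part (b) is correct and essentially the paper's own argument: the paper takes the host graph $K_{s-1,n-s+1}$ where you take $K_{a,n-a}$ (both are $K_{s,t}$-free precisely because $a\le s-1$, by the same ``one side would have to sit inside the small part'' observation), both give $\binom{n-a}{b}=\Omega(n^b)$ copies, and the upper bound via Proposition \ref{prop21} (or the direct common-neighbourhood count) is identical.

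In part (a), however, there is a genuine flaw in the key step. After fixing an $s$-subset $S$ of the $a$-class you assert that both the remaining $a-s$ vertices of the $a$-class and the whole $b$-class lie in the common neighbourhood of $S$, a set of size at most $t-1$. The $b$-class does lie there, but the remaining $a-s$ vertices of the $a$-class need not: a copy of $K_{a,b}$ as a subgraph requires no edges inside a colour class, so the other $a-s$ vertices of the $a$-class need not be adjacent to any vertex of $S$, and a priori they are unconstrained by $S$. (When $a=s$ there is nothing to fix, but for $a>s$ the step as written fails, and it is exactly the point your closing remark leans on.) The bound is salvageable with one more application of $K_{s,t}$-freeness in the opposite direction: once the $b$-class is confined to the at most $t-1$ common neighbours of $S$, pick an $s$-subset of the $b$-class (possible since $b\ge s$); its common neighbourhood also has size at most $t-1$ and contains the entire $a$-class, so the leftover $a-s$ vertices contribute only $O(1)$ choices. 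This two-stage argument is precisely what the paper does, with the roles of the classes swapped: it anchors $S$ inside the $b$-class, confines the $a$-class to the common neighbourhood of $S$ ($\binom{t-1}{a}$ choices), and then confines the remaining $b-s$ vertices to the common neighbourhood of the $a$-class, which is where the hypothesis $a\ge s$ is used.
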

	
	\begin{proof} For (a) let us consider a $K_{s,t}$-free graph $G$ and an arbitrary set $S$ of $s$ vertices. We claim that $S$ is contained in the larger color class (i.e., with size $b$) of at most $O(1)$ copies of $K_{a,b}$. Indeed, the vertices of $S$ have at most $t-1$ common neighbors, thus there are $\binom{t-1}{a}$ ways to pick the other side, and they have at most $t-1$ common neighbors, thus there are at most $\binom{t-1-s}{b-s} = O(1)$ copies of $K_{a,b}$ such that their larger color class contains $S$. So there are at most $O(n^s)$ copies of $K_{a,b}$, as desired.
		
		For (b) let us consider the graph $K_{s-1,n-s+1}$. It is easy to see that it contains $\Omega(n^b)$ copies of $K_{a,b}$. The upper bound $O(n^b)$ follows from Proposition \ref{prop21}. 
% 		On the other hand, let $G$ be a $K_{s,t}$-free graph and consider an arbitrary set $B$ of vertices of size $b$, it has at most $t-1$ common neighbors (otherwise, we can find $K_{s,t}$ in $G$). Thus there are at most $\binom{t-1}{a} = O(1)$ copies of $K_{a,b}$ having $B$ as one of their color classes. So there are at most $O(n^b)$ copies of $K_{a,b}$ in $G$, as required.
		
	\end{proof}
	
	\section{Concluding remarks and open problems}
	
	$\bullet$ Our conclusion is that the significant difference between $ex(n,H,F)$ and $ex(n,H,kF)$ mostly comes from the ability to count subgraphs of $H$ due to the universal vertices (vertices of degree $n-1$). A particular example when this is the case is $ex(n,F,kF)$. We did not deal with this especially interesting case, but for complete graphs Theorem \ref{compli} implies $$ex(n,K_t,kK_t)=(k-1+o(1)) \left(\frac{n}{t-1}\right)^{t-1}.$$

	\noindent
	$\bullet$ Another natural direction is to count $lH$ instead of $H$. Here we present two results of this type. 
	
	\begin{prop}
		\label{matching_triangle_free}
		$$ex(n, lK_2, K_3)= \frac{1}{l!}\Big\lfloor \frac{n^2}{4}\Big\rfloor\Big\lfloor \frac{(n-2)^2}{4}\Big\rfloor\dots \Big\lfloor \frac{(n-2l+2)^2}{4}\Big\rfloor.$$
		
	\end{prop}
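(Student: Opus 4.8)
The plan is to prove matching upper and lower bounds, both equal to $\frac{1}{l!}\prod_{i=0}^{l-1}\lfloor(n-2i)^2/4\rfloor$. We may assume $n\ge 2l$, since otherwise a copy of $lK_2$ (which has $2l$ vertices) cannot occur, and for $2l-2\le n<2l$ the displayed product already vanishes because its last factor is $\lfloor 0\rfloor$ or $\lfloor 1/4\rfloor$.

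For the upper bound, let $G$ be a triangle-free graph on $n$ vertices, and count \emph{ordered} $l$-tuples $(e_1,\dots,e_l)$ of pairwise vertex-disjoint edges of $G$. I would build such a tuple greedily: having already chosen disjoint edges $e_1,\dots,e_j$, the graph $G-V(e_1)-\dots-V(e_j)$ is triangle-free on $n-2j$ vertices, so by Mantel's theorem \cite{M1907} it has at most $\lfloor(n-2j)^2/4\rfloor$ edges, hence there are at most $\lfloor(n-2j)^2/4\rfloor$ choices for $e_{j+1}$, regardless of the earlier choices. Multiplying these bounds for $j=0,1,\dots,l-1$ shows that the number of ordered $l$-tuples of pairwise disjoint edges of $G$ is at most $\prod_{i=0}^{l-1}\lfloor(n-2i)^2/4\rfloor$. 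Since every copy of $lK_2$ in $G$ (i.e.\ every unordered $l$-matching) corresponds to exactly $l!$ such ordered tuples, we conclude $N(lK_2,G)\le \frac{1}{l!}\prod_{i=0}^{l-1}\lfloor(n-2i)^2/4\rfloor$, as desired.

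For the lower bound I would take $G=K_{\lfloor n/2\rfloor,\lceil n/2\rceil}$, which is triangle-free. The key observation is that every edge of $G$ joins the two colour classes, so any $i$ pairwise disjoint edges use exactly $i$ vertices from each class; deleting their endpoints leaves $K_{\lfloor n/2\rfloor-i,\,\lceil n/2\rceil-i}$, again a complete bipartite graph whose two parts differ in size by at most one, and therefore with exactly $(\lfloor n/2\rfloor-i)(\lceil n/2\rceil-i)=\lfloor(n-2i)^2/4\rfloor$ edges. Thus the greedy count from the previous paragraph is \emph{exact} for this $G$: the number of ordered $l$-tuples of pairwise disjoint edges equals $\prod_{i=0}^{l-1}\lfloor(n-2i)^2/4\rfloor$, so $N(lK_2,G)=\frac{1}{l!}\prod_{i=0}^{l-1}\lfloor(n-2i)^2/4\rfloor$. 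Combining with the upper bound proves the claim.

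The argument is essentially a greedy edge-by-edge count coupled with Mantel's theorem, so there is no serious obstacle; the only point requiring care is the bookkeeping with the floor function. Concretely, one must check that deleting a perfect matching on $2i$ vertices from a \emph{balanced} complete bipartite graph again yields a balanced complete bipartite graph, which is precisely what makes the per-step bound $\lfloor(n-2i)^2/4\rfloor$ attained simultaneously for all $0\le i\le l-1$; this amounts to the elementary identity $(\lfloor n/2\rfloor-i)(\lceil n/2\rceil-i)=\lfloor(n-2i)^2/4\rfloor$, valid because $\lfloor n/2\rfloor$ and $\lceil n/2\rceil$ never differ by more than one.
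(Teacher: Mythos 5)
Your proof is correct and follows essentially the same route as the paper: the upper bound via greedily counting ordered $l$-tuples of disjoint edges with Mantel's theorem applied at each step and dividing by $l!$, and the lower bound via $K_{\lfloor n/2\rfloor,\lceil n/2\rceil}$. The only difference is that you spell out the exactness of the count in the bipartite construction (the identity $(\lfloor n/2\rfloor-i)(\lceil n/2\rceil-i)=\lfloor (n-2i)^2/4\rfloor$), which the paper leaves implicit.
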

	
	\begin{proof} 
		The lower bound is given by the complete bipartite graph $K_{\lfloor n/2 \rfloor,\lceil n/2\rceil}$. 
		
		Let $G$ be an triangle free graph on $n$ vertices. To prove the upper bound, we first select an edge $e_1$ from $G$ and then we select an edge $e_2$ disjoint from $e_1$, and then an edge $e_3$ disjoint from both $e_1$ and $e_2$ and so on. By Mantel's theorem, the maximum number of edges in a triangle-free graph is at most $\lfloor n^2/4\rfloor$ so we can pick $e_1 = u_1v_1$ in at most $\lfloor n^2/4\rfloor$ ways. Since the subgraph of $G$ induced by $V(G) \setminus \{u_1, v_1\}$ is also triangle-free, we can pick $e_2$ in at most $\lfloor (n-2)^2/4\rfloor$ ways and $e_3$ in at most $\lfloor (n-4)^2/4\rfloor$ ways (by Mantel's theorem again) and so on, giving a total of $\lfloor n^2/4\rfloor\lfloor (n-2)^2/4\rfloor\dots \lfloor (n-2l+2)^2/4\rfloor$ ordered tuples of $l$ independent edges $(e_1, e_2, \ldots, e_l)$. Since each copy of $lK_2$ is counted $l!$ times, this implies the desired upper bound.
             
%         As the maximum number of edges in a triangle-free graph is at most $\lfloor n^2/4\rfloor$, we can pick an edge $uv$ of $G$ in at most $\lfloor n^2/4\rfloor$ ways. Moreover, for each edge $uv$, the subgraph of $G$ induced by $V(G) \setminus \{u, v\}$ is also triangle-free. Thus by induction, it contains at most $\lfloor(n-2)^2/4\rfloor\lfloor(n-4)^2/4\rfloor\dots \lfloor (n-2l+2)^2/4\rfloor$ copies of $(l-1)K_2$, which implies the desired upper bound.

	\end{proof}
	
	Using Proposition \ref{matching_triangle_free}, we prove the following asymptotic result.
	
	\begin{thm} 
		Let $l<k$. Then $$ex(n, lK_3, kK_3)=(1+o(1))\binom{k-1}{l} \left(\frac{n^{2}}{4}\right)^l.$$
	\end{thm}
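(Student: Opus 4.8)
The plan is to prove matching lower and upper bounds, both built on the canonical $(k,K_3)$-partition, Mantel's theorem, and the matching count underlying Proposition \ref{matching_triangle_free}.

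\textbf{Lower bound.} I would take $G^{\ast}:=K_{k-1}+T_2(n-k+1)$, the join of $k-1$ universal vertices with a balanced complete bipartite graph $H$ on $n-k+1$ vertices. Since $H$ is triangle-free, every triangle of $G^{\ast}$ uses at least one of the $k-1$ universal vertices, so $G^{\ast}$ is $kK_3$-free. To count copies of $lK_3$ I would restrict attention to those in which each of the $l$ triangles is a single universal vertex together with an edge of $H$: there are $\binom{k-1}{l}$ choices for the set of universal vertices used (this is where $l<k$ enters), and for each such set the number of ways to choose $l$ pairwise disjoint edges of $H$ together with a bijection onto the chosen universal vertices equals the number of ordered $l$-tuples of pairwise disjoint edges of $H$, which is $(1+o(1))(n^2/4)^l$ because $H$ has $(1+o(1))\tfrac1{l!}(n^2/4)^l$ matchings of size $l$ (computed exactly as in the proof of Proposition \ref{matching_triangle_free}). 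Distinct universal sets give distinct copies, hence $ex(n,lK_3,kK_3)\ge (1+o(1))\binom{k-1}{l}(n^2/4)^l$.

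\textbf{Upper bound.} Let $G$ be $kK_3$-free and fix a canonical $(k,K_3)$-partition: $G_L$ is a maximum family of $m\le k-1$ pairwise disjoint triangles $B_1,\dots,B_m$, the set $L(G)=V(G_L)$ has $3m=O(1)$ vertices, and $G_R$ is triangle-free, so every triangle of $G$ meets $L(G)$. I would split the copies of $lK_3$ into those in which some triangle meets $L(G)$ in at least two vertices and those in which every triangle meets $L(G)$ in exactly one vertex. In the first case the $l$ triangles occupy at most $1+2(l-1)=2l-1$ vertices of $R(G)$ and $O(1)$ vertices of $L(G)$, so there are at most $O(n^{2l-1})=o(n^{2l})$ of them. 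For the second case, write such a copy as $\{v_ie_i:1\le i\le l\}$ with $v_i\in L(G)$ distinct and $e_i\in E(G_R)$, $e_i\subseteq N(v_i)$, and let $f(v)$ be the number of edges of $G_R$ spanned inside $N(v)$. Two observations finish it: (1) no two of the $v_i$ lie in the same $B_j$, since otherwise replacing $B_j$ by the disjoint triangles $v_ie_i$ and $v_{i'}e_{i'}$ would give $m+1$ pairwise disjoint triangles, contradicting maximality of $G_L$; and (2) for each fixed $B_j=\{a,b,c\}$, at most one of $f(a),f(b),f(c)$ exceeds $n$ once $n$ is large: if $f(a),f(b)>n$, then both $G_R[N(a)]$ and $G_R[N(b)]$ contain two disjoint edges (a graph with more edges than vertices has a matching of size $2$), while maximality of $G_L$ forces every edge of $G_R[N(a)]$ to meet every edge of $G_R[N(b)]$, and two disjoint edges in $G_R[N(a)]$ leave room for at most $4$ edges of $G_R[N(b)]$ — incompatible. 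Hence $g(B_j):=f(a)+f(b)+f(c)\le ex(n,K_3)+2n=\lfloor n^2/4\rfloor+2n$ by Mantel's theorem \cite{M1907}. Counting second-case copies by first choosing an injection of the $l$ triangles into $\{B_1,\dots,B_m\}$, then a central vertex in each chosen $B_j$, then an edge of $G_R$ in its neighborhood (ignoring that the $e_i$ must be disjoint, which only overcounts), and dividing by $l!$, bounds their number by $\binom{m}{l}(\lfloor n^2/4\rfloor+2n)^l\le (1+o(1))\binom{k-1}{l}(n^2/4)^l$. Adding the two cases gives the matching upper bound.

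\textbf{Main obstacle.} The $o(n^{2l})$ error terms are routine; the delicate point is pinning down the constant $\binom{k-1}{l}$, i.e. preventing spurious factors like $3^l$ arising from the three vertices of each $B_j$. That is exactly what observations (1) and (2) are for, and both rest on maximality of the canonical triangle family together with the elementary fact that a graph with more edges than vertices contains two disjoint edges.
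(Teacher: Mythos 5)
Your argument is correct, and its skeleton coincides with the paper's: the same extremal graph $K_{k-1}+T_2(n-k+1)$ for the lower bound, the canonical $(k,K_3)$-partition, the reduction showing that copies of $lK_3$ containing a triangle with at least two vertices in $L(G)$ contribute only $O(n^{2l-1})=o(n^{2l})$, and the replacement-by-two-disjoint-triangles trick exploiting maximality of the canonical triangle family (your observation (1) is exactly the paper's "a good copy meets each canonical triangle in at most one vertex"). Where you genuinely diverge is in how the constant $\binom{k-1}{l}$, rather than $3^l\binom{k-1}{l}$, is extracted. The paper fixes an $l$-set of canonical triangles and runs a dichotomy over all good copies attached to it: either some $A_i$ is used at two different vertices by two good copies, in which case all good triangles meeting $A_i$ number only $O(n)$ and the whole contribution is $o(n^{2l})$, or every good copy uses one fixed vertex $u_i$ of each $A_i$, and then the copies are counted by choosing an $l$-matching in the triangle-free graph $G_R$ via Proposition \ref{matching_triangle_free} and pairing its edges with $u_1,\dots,u_l$. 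You instead prove a local structural lemma: for each canonical triangle $B_j=\{a,b,c\}$, at most one of $f(a),f(b),f(c)$ exceeds $n$ (your $f(v)$ being the number of edges of $G_R$ inside $N(v)$), since two vertices of $B_j$ with superlinearly many neighborhood edges would each contain two disjoint such edges and hence yield two disjoint triangles replacing $B_j$; consequently the number of good triangles rooted at $B_j$ is at most $\lfloor n^2/4\rfloor+2n$ by Mantel, and you multiply these per-block bounds over an injection into the blocks, simply discarding the disjointness constraint on the edges. This buys an upper bound that never uses Proposition \ref{matching_triangle_free} (only Mantel) and replaces the paper's somewhat informal "every good copy uses the same vertex of $A_i$" bookkeeping with a clean per-block inequality; the paper's dichotomy, in exchange, reuses the matching count it has already established and gives the bound $l!\cdot ex(n,lK_2,K_3)$ per block-selection directly. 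Both routes are valid, since only an asymptotic upper bound is required, and your overcounting step (dropping disjointness of the $e_i$) is harmless at this level of precision.
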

	
	\begin{proof}
		The lower bound is given by the graph $K_{k-1} + K_{\lfloor (n-k+1)/2 \rfloor,\lceil (n-k+1)/2\rceil}$.
		
		%Lower bound construction is the usual.
		
For the upper bound, let $G$ be a $kK_3$-free graph and consider its canonical partition. We say that a triangle is \textit{good} if it has exactly one vertex in $L(G)$. 

We claim that it suffices to only count those copies of $lK_3$ in which every triangle is good. Indeed, no triangle of $lK_3$ has three vertices in $R(G)$, and if any of them has two vertices in $L(G)$, then we can pick at least $l+1$ vertices from $L(G)$ in $O(1)$ ways, and at most $2l-1$ vertices from $R(G)$ in $O(n^{2l-1}) = o(n^{2l})$ ways, which is covered by the error term in the theorem. So from now on we count the number of copies of $lK_3$ in which every triangle is good; we will refer to such a copy of $lK_3$ as a good copy.

We know that the subgraph of $G$ induced by $L(G)$ consists of (maximum possible number of) vertex-disjoint triangles $A_1,\dots, A_r$ for some $r \le k-1$. Let $a_i,b_i,c_i$ be the vertices of $A_i$ for each $1 \le i \le r$. 

A good copy of $lK_3$ can contain only one of the vertices $a_i,b_i,c_i$ for any $1 \le i \le r$, because otherwise we can find more than $r$ vertex-disjoint triangles in $G$, a contradiction. So in order to count the number of good copies of $lK_3$ in $G$, we first pick $l$ of the $r \le k-1$ triangles -say $A_1, A_2, \ldots, A_l$ without loss of generality - from $G_L$ in $\binom{r}{l} \le \binom{k-1}{l}$ ways, and then count the number of good copies of $lK_3$ in which every triangle has a vertex in one of the triangles $A_1, A_2, \ldots, A_l$. Now we bound this latter number.

First let us assume that there are two good copies of $lK_3$ in $G$ which use two different vertices of the triangle $A_i = a_ib_ic_i$, say $a_i$ and $b_i$ for some $1 \le i \le l$. Let the corresponding good triangles of these $lK_3$'s be $a_ixy$ and $b_ipq$. Then the edges $xy$ and $pq$ must share a vertex, because otherwise we can replace $A_i$ with the triangles $a_ixy$ and $b_ipq$ to produce more than $r$ vertex-disjoint triangles in $G$, a contradiction. Thus it is easy to see that number of good triangles containing $a_i$ is at most $2n$ since there are at most $2n$ edges that share a vertex with $pq$. Similarly, the number of good triangles containing $b_i$ or $c_i$ is also at most $2n$ each. Therefore, the total number of good triangles which have a vertex in $A_i$ is at most $6n = O(n)$ in this case. This implies that the number of good copies of $lK_3$ in which every triangle has a vertex in one of the triangles $A_1, A_2, \ldots, A_l$ is at most $O(n) \cdot O(n^{2l-2}) = o(n^{2l})$, which is covered by the error term of the theorem again.

So we can assume that every such good copy of $lK_3$ contains only one of the vertices $a_i,b_i,c_i$ for each $1 \le i \le l$, say $u_1, u_2, \ldots, u_l$. Thus we can count the number of those copies by picking a copy of $lK_2$ from $G_R$ in at most $ex(n, lK_2, K_3) = (1+o(1)) \frac{1}{l!} \left(\frac{n^{2}}{4}\right)$ ways by Proposition \ref{matching_triangle_free} (recall that $G_R$ is triangle-free), and then pairing the $l$ edges of $lK_2$ with $u_1, u_2, \ldots, u_l$ in at most $l!$ ways. 

Therefore, the total number of good copies of $lK_3$ is at most $(1+o(1))  \binom{k-1}{l} \left(\frac{n^{2}}{4}\right)^l$, as required.

		%Upper bound: each K3 on the left hand side gives us a coloring of the edges on the right hand side (that connects  a vertex). Keves(most 1) szin vagy keves el. If we consider a matching on the right hand side then it is contained in ... (exactly the same number as the upper bound gives us) $lK_3$.
\end{proof}

	%$\bullet$ Alon Shikhelman questions with $k$ disjoint copies. e.g. $ex(n,K_{a,b}, k K_{s,t})$ or even $ex(n,l K_{a,b}, k K_{s,t})$
	
	\vspace{2mm}
	\noindent
	$\bullet$ We mention some more specific open problems.
	
	\medskip
	
	$\circ$ A lower bound of $\Omega(n^{s})$ is trivial in Proposition \ref{bipartite} for $s=1$. However it would be appealing to prove it for all $s$ or even in case $s=2$.
	
	\smallskip
	
	$\circ$ It would be also interesting to improve Theorem \ref{completegen} and prove an asymptotic result.
	
	\smallskip
	
	$\circ$ In this article our results mostly obtain the order of magnitude or asymptotics of various quantities. It would be interesting to prove exact results corresponding to them.
	
	\vspace{2mm}
	\noindent
	$\bullet$ Finally, let us mention that the Tur\'an number of the disjoint union of graphs $F_1,F_2,...,F_k$, has not been investigated when the $F_i$'s can be different. (See Theorem \ref{Gorgol} and the comment after it, for the case when all the $F_i$'s are the same.) It is not hard to prove the following proposition. However, it would be interesting to prove a sharper result in this case.
	
	\begin{prop} \label{gorgol_should_have_done_it} Let us suppose that we have graphs $F_1,...,F_k$ and let $F=\cup_{1\le i \le k}F_i$. Then we have
		$$ex(n,F)=\max \{ex(n,F_i): i\le l\} + O(n).$$
	\end{prop}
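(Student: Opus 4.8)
The plan is to establish the two matching bounds separately. The lower bound is immediate: if $i$ is an index attaining $\max_{1\le j\le k} ex(n,F_j)$, then an $n$-vertex $F_i$-free graph with $ex(n,F_i)$ edges is automatically $F$-free, since $F_i$ is a subgraph of $F$; hence $ex(n,F)\ge \max_{1\le j\le k} ex(n,F_j)$, with no error term needed.

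For the upper bound I would argue by induction on $k$, along the lines of Gorgol's argument for Theorem~\ref{Gorgol}. The case $k=1$ is trivial. For $k\ge 2$, set $F'=\bigcup_{2\le j\le k} F_j$, a graph with $k-1$ components, and let $G$ be an $F$-free graph on $n$ vertices. If $G$ contains no copy of $F'$, then $|E(G)|\le ex(n,F')$, and the induction hypothesis gives $ex(n,F')\le \max_{2\le j\le k} ex(n,F_j)+O(n)\le \max_{1\le j\le k} ex(n,F_j)+O(n)$, as desired. Otherwise $G$ contains a copy $C$ of $F'$; put $L=V(C)$, so $|L|\le |V(F)|=O(1)$, and let $G'$ be the subgraph of $G$ induced on $V(G)\setminus L$. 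If $G'$ contained a copy of $F_1$, it would be vertex-disjoint from $C$, and then $C$ together with this copy would form a copy of $F_1\cup F_2\cup\dots\cup F_k=F$ in $G$, a contradiction; thus $G'$ is $F_1$-free and $|E(G')|\le ex(|V(G')|,F_1)\le ex(n,F_1)$. Since every edge of $G$ not lying in $G'$ is incident to $L$, we obtain $|E(G)|\le |E(G')|+|L|\,n\le ex(n,F_1)+|V(F)|\,n\le \max_{1\le j\le k} ex(n,F_j)+O(n)$, which closes the induction.

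There is no serious obstacle here: the only content is the observation that deleting the vertex set of one copy of $\bigcup_{j\ge 2}F_j$ leaves an $F_1$-free graph, combined with the trivial fact that this deleted set has bounded size and so is incident to only $O(n)$ edges. (As in Theorem~\ref{Gorgol}, one should assume each $F_i$ has at least one edge, so that $ex(\cdot,F_1)$ is monotone in the number of vertices and all quantities are non-degenerate.) I note in passing that, unlike the triangle-counting statement in Theorem~\ref{general_bound}, here we do not need the ``shared vertex'' refinement: bounding the number of edges rather than triangles only requires the residual graph to avoid a \emph{single} $F_i$.
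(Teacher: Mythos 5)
Your proof is correct and rests on the same idea as the paper's: delete the (boundedly many) vertices of a partial disjoint embedding of some of the $F_i$'s, observe that the remaining graph must avoid a single leftover $F_i$, and charge the deleted vertices $O(n)$ edges. The only cosmetic difference is that you organize this as an induction on $k$ peeling off $F_2\cup\dots\cup F_k$, whereas the paper takes in one step a maximal collection of disjoint copies of $F_1,\dots,F_j$ and notes the rest is $F_{j+1}$-free.
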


	\begin{proof}[Proof of Proposition \ref{gorgol_should_have_done_it}] %The lower bound is trivial. For the upper bound, consider an $F$-free graph $G$. 
    Let $j$ be an integer such that $ex(n,F_j) = \max \{ex(n,F_i): i\le l\}$. Then the lower bound follows by taking an $F_j$-free graph with maximum possible number of edges. 
    
    For the upper bound, consider an $F$-free graph $G$.
		%If it does not contain $F_i$ for some $i$, it has at most $ex(n,F_i)$ edges and we are done. 
		Let $F'$ be a subgraph of $G$ consisting of vertex disjoint copies of $F_1, \dots, F_j$ where $j$ is an integer which is chosen as large as possible. Clearly $j < l$ as $G$ is $F$-free. Then, of course, the subgraph of $G$ induced by $V(G) \setminus  V(F')$ is $F_{j+1}$-free, so it contains at most $ex(n,F_{j+1}) \le \max \{ex(n,F_i): i\le l\}$ edges. 
        %(so it consists of at most $k_i$ disjoint copies of $F_i$ for every $i$). 
       Moreover, there are at most $O(n)$ edges incident to the vertices of $F'$. Adding these bounds up, the proof is complete.
       %inside the subgraph induced by $V(F')$ and $O(n)$ edges between $V(F_0)$ and the remaining part.
	\end{proof}

	%$\bullet$ $ex(n,K_3,kK_3)$

	%$\bullet$ Interested in the subgraphs of the graphs that can be get with the operation Dani introduced and it is a kind of 'treewidth operation'.
	
	\section*{Acknowledgements}
	We are grateful to an anonymous reviewer for very carefully reading our paper and for many helpful comments. 
    
	Research of Gerbner was supported by the J\'anos Bolyai Research Fellowship of the Hungarian Academy of Sciences and by the National Research, Development and Innovation Office -- NKFIH, grant K 116769.
	
	\noindent
	Research of Methuku was supported by the National Research, Development and Innovation Office -- NKFIH, grant K 116769.
	
	\noindent
	Research of Vizer was supported by the National Research, Development and Innovation Office -- NKFIH, grant SNN 116095.

\end{document}